\newtheorem{theorem}{Theorem}[section]
\newtheorem{lemma}[theorem]{Lemma}
\newtheorem{proposition}[theorem]{Proposition}
\newtheorem{corollary}[theorem]{Corollary}
\theoremstyle{definition}
\newtheorem{definition}[theorem]{Definition}
\newtheorem{example}[theorem]{Example}
\newtheorem{remark}[theorem]{Remark}
\newcommand{\zco}{\mathfrak{C}\mspace{1mu}}
\newcommand{\zdu}{\mathfrak{D}\mspace{1mu}}
\newcommand{\CZ}{\mathbb{C}\mathfrak{Z}}
\newcommand{\Mat}{{\rm Mat}}
\newcommand{\Z}{\mathfrak{Z}}
\newcommand{\q}{\mathbf{q}}
\newcommand{\Ann}{{\rm Ann}_{\Z}}
\DeclareMathOperator{\sgn}{sgn}
\DeclareMathOperator{\spn}{span}
\newcommand{\zm}{\mathbf{0}}
\newcommand{\mI}{\mathbb{I}}
\numberwithin{equation}{section}
\begin{document}

\title{Spectral Properties of the Zeon Combinatorial Laplacian}

\author{G.~Stacey Staples\footnote{Email: sstaple@siue.edu}}
\affil{Department of Mathematics \&
Statistics\\
Southern Illinois University Edwardsville\\
Edwardsville, IL 62026-1653\\
USA}

\date{}  

\maketitle

\begin{abstract}
Given a finite simple graph $G$ on $m$ vertices, the zeon combinatorial Laplacian $\Lambda$ of $G$ is an $m\times m$ graph having entries in the complex zeon algebra $\CZ$.   It is shown here that if the graph has a unique vertex $v$ of degree $k$, then the Laplacian has a unique zeon eigenvalue $\lambda$ whose scalar part is $k$.  Moreover, the canonical expansion of the nilpotent (dual) part of $\lambda$ counts the cycles based at vertex $v$ in $G$.   With an appropriate generalization of the zeon combinatorial Laplacian of $G$, all cycles in $G$ are counted by  $\Lambda$.  Moreover when a generalized zeon combinatorial Laplacian $\Lambda$ can be viewed as a self-adjoint operator on the $\CZ$-module of $m$-tuples of zeon elements, it can be interpreted as a quantum random variable whose values reveal the cycle structure of the underlying graph.\\
\\
MSC: Primary 15B33, 15A18, 05C50, 05E15, 81R99
\\
Keywords: zeons, graphs, cycles, paths, eigenvalues, quantum probability
\end{abstract}

\section{Introduction}

The {\em $n$-particle (real) zeon algebra} is a commutative $\mathbb{R}$-algebra generated by a fixed collection  $\{\zeta_{\{i\}}:1\le i\le n\}$ and scalar identity $1=\zeta_\varnothing$, whose generators (zeons) satisfy the zeon commutation relations \begin{equation*}
\zeta_{\{i\}}\zeta_{\{j\}}+\zeta_{\{j\}}\zeta_{\{i\}}=\begin{cases}
2\zeta_{\{i\}}\zeta_{\{j\}}\ne 0& i\ne j,\\
0&\text{\rm otherwise.}
\end{cases}
\end{equation*}   
We denote this algebra by $\Z_n$, although it has been denoted ${\mathcal{C}\ell_n}^{\rm nil}$ in a number of earlier papers where it was defined as a commutative subalgebra of a Clifford algebra of appropriate signature.  

Like fermions, the generators square to zero; like bosons, the generators commute.  Hence the name ``zeon algebra'', first suggested by Feinsilver~\cite{Feinsilverzeons}.

Combinatorial properties of zeons have proven useful in problems ranging from enumerating paths and cycles in finite graphs to routing problems in communication networks.  Many such enumeration problems are NP complete~\cite{Kar72}.  Where classical approaches to routing problems require construction of trees and the use of heuristics to prevent combinatorial explosion, the zeon algebraic approach avoids tree constructions and heuristics.  

Using zeon generators to define the nilpotent adjacency matrix of a finite graph allows one to count paths and cycles by considering powers of the matrix~\cite{ICCA7}.  Replacing the ordinary adjacency matrix with the nilpotent adjacency of a graph allows us to define the graph's zeon combinatorial Laplacian.  When $\Lambda$ denotes the zeon combinatorial Laplacian of a simple graph $G$ on $m$ non-isolated vertices, the paths between distinct vertices $v_i$ to $v_j$ in $G$ are enumerated by the off-diagonal entries of the inverse matrix $\Lambda^{-1}$~\cite{zeon laplacian}.  

The current paper is an extension of \cite{zeonspectraltheorem}, in which eigenvalues and eigenvectors of matrices with zeon entries were first studied.  The rest of the paper is laid out as follows.

Essential terminology and notation is presented in Section \ref{preliminaries}.  Matrix representations are briefly discussed in Section \ref{zeons from Pauli matrices}.  Zeros of zeon polynomials and the Fundamental Theorem of Zeon Algebra are reviewed in Section \ref{zeon zeros}.

Zeon matrices are viewed as $\CZ$-linear transformations on the zeon module $\CZ^m$ in Section \ref{matrices as transformations}, where essential properties are laid out.  In particular, the determinant, inverses, Gaussian elimination, inner products, the characteristic polynomial, eigenvalues, and eigenvectors are discussed.   

Main results concerning the zeon combinatorial Laplacian are developed in Section \ref{zeon Laplacian}.  In particular, we show that when a simple finite graph has a vertex $v$ of unique degree, the zeon combinatorial Laplacian has a unique eigenvalue $\lambda$ whose scalar part is equal to the degree of said vertex.  Further, the nilpotent (``dual'') part of $\lambda$ enumerates the vertex subsets on which cycles based at $v$ exist.  Moreover, the coefficients of basis blades in the canonical expansion of $\lambda$ count the cycles that exist on each vertex subset.  Associated with each zeon eigenvalue is a zeon eigenvector that we characterize in terms of the paths terminating at vertex $v$.   The eigenvalue $\lambda$ associated with a vertex of unique degree is characterized in Theorem \ref{laplacian eigenvalue}.  The eigenvector associated with $\lambda$ is characterized in Theorem \ref{Laplacian eigenvector}.

Generalizations of the zeon Laplacian appear in Section \ref{generalizations}.  In particular, a ``vertex-labeled zeon Laplacian'' is defined such that each vertex of the graph has an associated zeon eigenvalue whose dual part enumerates cycles based at the vertex.  Further, a quantum-probabilistic interpretation of the zeon Laplacian is obtained by defining a symmetric zeon Laplacian.  This ``quantum'' zeon combinatorial Laplacian can be viewed as a quantum observable whose values enumerate cycles contained in the associated graph. 

The paper concludes with parting comments and a discussion of avenues for further research.

\subsection{Preliminaries}\label{preliminaries}

For $n\in\mathbb{N}$,  let $\CZ_n$ denote the complex abelian algebra generated by the collection $\{\zeta_{\{i\}}: 1\le i\le n\}$ along with the scalar $1=\zeta_\varnothing$ subject to the following multiplication rules:
\begin{gather*}
\zeta_{\{i\}}\,\zeta_{\{j\}}=\zeta_{\{i,j\}}=\zeta_{\{j\}}\,\zeta_{\{i\}}\,\,\text{ \rm
for }i\ne j,\text{ \rm and}\\
{\zeta_{\{i\}}}^2=0\,\,\text{ \rm for }1\le i\le n.
\end{gather*}
It is evident that a general element $u\in\CZ_n$ can be expanded as $u=\displaystyle\sum_{I\in2^{[n]}}u_{I}\,\zeta_{I}$, or more simply as $\sum_I u_I\zeta_I$,  where $I\in2^{[n]}$ is a subset of the {\em $n$-set}, $[n]:=\{1,2,\ldots,n\}$, used as a multi-index, $u_{I}\in\mathbb{C}$, and $\displaystyle\zeta_{I}=\prod_{\iota\in I}\zeta_\iota$.  The algebra $\CZ_n$ is called the ($n$-particle) complex {\em zeon algebra}~\footnote{The $n$-particle (real) zeon algebra has been denoted by ${\mathcal{C}\ell_n}^{\rm nil}$ in a number of papers because it can be constructed as a subalgebra of the Clifford algebra $\mathcal{C}\ell_{2n,2n}$.}.  

As a vector space, this $2^n$-dimensional algebra has a canonical basis of  {\em basis blades} of the form $\{\zeta_I: I\subseteq [n]\}$.  The null-square property of the generators $\{\zeta_i:1\le i\le n\}$ guarantees that the product of two basis blades satisfies the following:\begin{equation}\label{blade product}
\zeta_I\zeta_J=\begin{cases}
\zeta_{I\cup J}& I\cap J=\varnothing,\\
0&\text{\rm otherwise.}
\end{cases}
\end{equation}
It should be clear that $\CZ_n$ is graded.  For non-negative integer $k$, the {\em grade-$k$ part} of element $u=\sum_I u_I\zeta_I$ is defined as \begin{equation*}
\langle u\rangle_k=\sum_{\{I:|I|=k\}}u_I\zeta_I.
\end{equation*}

Given a zeon element $u=\sum_I u_I\zeta_I$, the {\em complex conjugate} of $u$ is defined by $\overline{u}=\sum_I \overline{u_I}\zeta_I$.
The $n$-particle {\em real zeon algebra} $\Z_n$ is the subalgebra of $\CZ_n$ defined by $\Z_n=\{u\in\CZ_n: \overline{u}=u\}.$

An inner product is defined on $\CZ_n$ by sesquilinear extension of \begin{equation*}
\left< \sum_I u_I \zeta_I, \alpha\zeta_J\right> = \overline\alpha u_J,
\end{equation*}
so that any element $u\in\CZ_n$ has canonical expansion $u=\sum_I \langle u,\zeta_I\rangle\zeta_I.$

Let $u\in\CZ_n$ such that $\zco u\ne 0$ and let $\kappa$ denote the index of nilpotency~\footnote{The index of nilpotency of $\zdu u$ is the least positive integer $\kappa$ such that  $(\zdu u)^\kappa=0$.} of $\mathfrak{D}u$.  Then $u$ is invertible if and only if $\zco u\ne0$, and the unique inverse of $u$ is given by \begin{equation*}
u^{-1}=\frac{1}{\zco u}\sum_{j=0}^{\kappa-1}(-1)^{j}(\zco u)^{-j}(\mathfrak{D}u)^{j}.
\end{equation*}

The invertible elements of $\CZ_n$ form a multiplicative Abelian group, denoted ${\CZ_n}^\times$, and the nilpotent elements form a maximal ideal, denoted ${\CZ_n}^\circ$.

Given $z\in\CZ_n$, we write $\zco z=\langle z\rangle_0$ for the {\em complex (scalar) part} of $z$, and $\mathfrak{D}z=z-\zco z$ for the {\em dual (nilpotent) part} of $z$.

\subsection{Matrix Representations}\label{zeons from Pauli matrices}

It is worth noting that computations in $\CZ_n$ can be performed with complex square matrices of order $2^n$, as described below.  The representation of an $m\times m$ matrix with $\CZ_n$ entries is then a square complex matrix of order $m2^n$.  

Begin by defining the matrix $\eta$ by 
\begin{equation}\label{eta def}
\eta=\begin{pmatrix}
0&1\\
0&0
\end{pmatrix}
\end{equation}
and observe that in terms of the Pauli matrices, $\eta=\frac{1}{2}(\sigma_x+\overline{i}\sigma_y)$. Let $\eta^\dag$ denote the Hermitian conjugate of $\eta$.  In terms of Pauli matrices, $\eta$ satisfies the following identities:
\begin{align*}
\eta+\eta^\dag &=\sigma_x,\\
\eta-\eta^\dag &=\overline{i}\sigma_y,\\
\eta\eta^\dag -\eta^\dag\eta&=\sigma_z,\\
\eta\eta^\dag+\eta^\dag\eta &=\sigma_0.
\end{align*}

In $\CZ_n$, the matrix representation of $\zeta_{\{j\}}$ is given by \begin{equation*}
\zeta_{\{i\}}\mapsto {\sigma_0}^{(i-1)}\otimes\eta\otimes{\sigma_0}^{\otimes(n-i)}
\end{equation*}
for each $i=1,\ldots, n$.

\begin{example}
In $\CZ_1$, the element $a+b\,\zeta_{\{1\}}$ is represented by \begin{equation*}
a\,\sigma_0+b\,\eta= a\begin{pmatrix}
1&0\\
0&1\end{pmatrix}+b\begin{pmatrix}0&1\\0&0\end{pmatrix}= \begin{pmatrix}
a&b\\
0&a\end{pmatrix},
\end{equation*}
while in $\CZ_2$, the element $a+b\,\zeta_{\{1\}}+c\,\zeta_{\{2\}}+d\,\zeta_{\{1,2\}}$ is represented by \begin{align*}
a \,{\sigma_0}^{\otimes 2}+b\,\eta\otimes{\sigma_0}+c\,\sigma_0\otimes\eta+d\,\eta^{\otimes 2}&=a\begin{pmatrix}
1&0&0&0\\
0&1&0&0\\
0&0&1&0\\
0&0&0&1\end{pmatrix}+b\begin{pmatrix}
0&0&1&0\\
0&0&0&1\\
0&0&0&0\\
0&0&0&0\end{pmatrix}\\
&+c\begin{pmatrix}
0&1&0&0\\
0&0&0&0\\
0&0&0&1\\
0&0&0&0\end{pmatrix}+d\begin{pmatrix}
0&0&0&1\\
0&0&0&0\\
0&0&0&0\\
0&0&0&0\end{pmatrix}\\
&=\begin{pmatrix}
a&c&b&d\\
0&a&0&b\\
0&0&a&c\\
0&0&0&a\end{pmatrix}.
\end{align*}
\end{example}

Throughout the current work, computations involving  $m\times m$ zeon matrices are performed symbolically (with {\em Mathematica}) using $m\times m$ matrices and set operations on multi indices in computations.  

\subsection{Spectrally Simple Zeros of Complex Zeon Polynomials}\label{zeon zeros}

Before any discussion of eigenvalues is possible, zeros of zeon polynomials must be addressed.  

Given a complex zeon polynomial $\varphi(u)=\alpha_m u^m+\cdots+\alpha_1 u+\alpha_0$, where $\alpha_j\in\CZ_n$ for $j=0, \ldots, m$, a {\em complex polynomial} $f_\varphi:\mathbb{C}\to\mathbb{C}$ is induced by \begin{equation*}
f_\varphi(z)=\sum_{\ell=0}^m (\zco \alpha_\ell) z^\ell.
\end{equation*}
It follows that $f_\varphi(\zco u)=\sum_{\ell=0}^m (\zco \alpha_\ell) (\zco u)^\ell=\zco(\varphi(u))$ so that $f_\varphi\circ\zco=\zco\circ \varphi$.  

It is evident that if $\lambda\in\CZ_n$ is a zero of the zeon polynomial $\varphi$, then $\zco\lambda$ is a zero of the induced complex polynomial $f_\varphi$.  If $f_\varphi$ has no complex zeros, then $\varphi$ has no zeon zeros.  Further, if $f_\varphi$ has a multiple zero $w_0\in\mathbb{C}$, $\varphi$ may or may not have a zero $w$ satisfying $\zco w=w_0$.   For this reason, we focus on ``spectrally simple'' zeros of polynomials with zeon coefficients.  

Letting $\varphi(u)\in{\CZ_n}[u]$ be a nonconstant monic zeon polynomial, we consider $\lambda\in{\CZ_n}$ to be a {\em simple} zero of $\varphi$ if $\varphi(u)=(u-\lambda)g(u)$ for some zeon polynomial $g$ satisfying $g(\lambda)\ne 0$.  Recalling that the spectrum of an element $u$ in a unital algebra is the collection of scalars $\lambda$ for which $u-\lambda$ is not invertible, it follows that when $u\in\CZ_n$, the spectrum of $u$ is the singleton $\{\lambda=\zco u\}$.  

\begin{definition}
A simple zero $\lambda\in{\CZ_n}$ of $\varphi(u)$ is said to be a {\em spectrally simple} if $\zco \lambda$ is a simple zero of the complex polynomial $f_\varphi(z)$.
\end{definition}  

The Fundamental Theorem of Zeon Algebra below (established in \cite{simplezeonzeros}) guarantees the existence of a simple  zeon zero of $\varphi(u)$ when the complex polynomial $f_\varphi(z)$ has a simple complex zero.

\begin{description}
\item [{\bf Fundamental Theorem of Zeon Algebra.}]
Let $\varphi(u)$ be a monic polynomial of degree $m$ over $\CZ_n$, and let $f_\varphi(z)$ be the complex polynomial induced by $\varphi$.  If $\lambda_0\in\mathbb{C}$ is a simple zero of $f_\varphi(z)$, then $\varphi(u)$ has a simple zero $\lambda$ such that $\zco\lambda = \lambda_0$. Moreover, such a zero $\lambda$ is unique.
\end{description}
Clearly, the requirement that $\varphi(u)$ be monic can be relaxed by simply requiring the leading coefficient $\alpha_m$ of $\varphi$ to be invertible.  Further, if $f_\varphi(z)$ is a nonconstant polynomial whose zeros are all simple, then $\varphi(u)$ has exactly $m$ complex zeon zeros.  In this case, we can say $\varphi$ {\em spectrally splits} over $\CZ_n$.

\subsection{The Complex Zeon Algebra $\CZ$}

The results above are easily generalized to zeon algebras of arbitrary dimension.  To that end, we remove the restriction on the number of generators and let $\CZ$ denote the complex zeon algebra defined as follows.

\begin{definition}
Let $\CZ$ denote the infinite-dimensional complex Abelian algebra generated by the collection $\{\zeta_{\{i\}}: i\in \mathbb{N}\}$ along with the scalar $1=\zeta_\varnothing$ subject to the following multiplication rules:
\begin{gather*}
\zeta_{\{i\}}\,\zeta_{\{j\}}=\zeta_{\{i,j\}}=\zeta_{\{j\}}\,\zeta_{\{i\}}\,\,\text{ \rm
for }i\ne j,\text{ \rm and}\\
{\zeta_{\{i\}}}^2=0\,\,\text{ \rm for }i\in \mathbb{N}.
\end{gather*}
For each finite subset $I$ of $\mathbb{N}$, define $\displaystyle\zeta_{I}=\prod_{\iota\in I}\zeta_\iota$.  Letting the finite subsets of positive integers be denoted by $[\mathbb{N}]^{<\omega}$, the algebra $\CZ$ has a canonical basis of the form $\{\zeta_I: I\in [\mathbb{N}]^{<\omega}\}$.   Elements of this basis are referred to as the {\em basis blades} of $\CZ$.   The algebra $\CZ$ is called the  {\em (complex) zeon algebra}.
\end{definition}

Any element $u\in\CZ$ can be expressed as a linear combination of basis blades indexed by finite subsets of $\mathbb{N}$.  Hence, there exists $N\in\mathbb{N}$ such that $u\in \CZ_n$ for all $n\ge N$.   With this in mind, the notation is simplified throughout the rest of the paper by using $\CZ$ in place of $\CZ_n$.

\section{Zeon Matrices as $\CZ$-Linear Transformations}\label{matrices as transformations}
 
For $m>1$, the $m$-tuples of zeon elements, ${\CZ}^m$, constitute a $\CZ$-module.  The $m\times m$ matrices over $\CZ$, denoted by $\Mat(m,\CZ)$, represent the $\CZ$-linear transformations of ${\CZ}^m$.  

A matrix $A\in\Mat(m,\CZ)$ can naturally be written as a sum of the form $A=A_\varnothing+\mathfrak{A}$ of a complex-valued matrix $A_\varnothing = \zco A$ and a nilpotent zeon-valued matrix $\mathfrak{A}=\zdu A\in\Mat\CZ^\circ$. 

\subsubsection*{Properties of Nilpotent Zeon Matrices} 
The following properties can be verified easily.
\begin{enumerate}[i.]
\item The collection of $m\times m$ square matrices over ${\CZ}^\circ$ forms an ideal.     
\item Each matrix in $\Mat(m,\CZ^\circ)$ is nilpotent.
\item Any power series of a zeon matrix $A\in\Mat(m,\CZ)$ reduces to a finite sum, and is therefore convergent; i.e., given $A=\zco A+\zdu A\in\Mat(m,\CZ)$, the following holds:
\begin{equation*}
\sum_{\ell=0}^\infty \alpha_\ell A^\ell=\sum_{\ell=0}
^{\kappa(\zdu A)}\alpha_\ell A^\ell,
\end{equation*}
where $\kappa(\zdu A)$ is a finite nonnegative integer, and $\alpha_\ell\in\CZ$ for each $\ell$.
\end{enumerate}

\subsection{The Determinant}

The determinant of $A\in\Mat(m,\CZ)$ is defined in the usual way by \begin{equation*}
|A|=\sum_{\sigma\in \mathcal{S}_m}\sgn(\sigma)\prod_{i=1}^m a_{i\,\sigma(i)},
\end{equation*}
where $\mathcal{S}_m$ is the symmetric group of order $m!$ and $\sgn (\sigma)$ is the signature of the permutation $\sigma$.

\begin{lemma}[Properties of the determinant]
Let $A$ and $B$ be $m\times m$ matrices over $\CZ$, and let $\alpha\in\CZ$.  Then the following hold:\begin{align*}
|AB|&=|A| |B|,\\
|\alpha A|&=\alpha^m |A|.
\end{align*}
In particular, $|A^{-1}|=|A|^{-1}$ when $A$ is invertible.
\end{lemma}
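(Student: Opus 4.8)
The plan is to exploit the single most important structural fact about $\CZ$: it is a \emph{commutative} unital ring (indeed a commutative $\mathbb{C}$-algebra). The determinant defined by the Leibniz formula over $\Mat(m,\CZ)$ is therefore literally the determinant of a matrix over a commutative ring, and every identity that holds for determinants over an arbitrary commutative ring holds here verbatim. The one point I would keep in mind throughout is that $\CZ$ is very far from a field: it has nonzero nilpotents and hence zero divisors, so I must avoid any argument that secretly uses that the coefficients form an integral domain or that ``$|A|\neq 0$'' implies invertibility. All three claims can instead be obtained by purely formal manipulations of the Leibniz sum, which need only commutativity.

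First I would dispatch the homogeneity identity $|\alpha A|=\alpha^m|A|$. Each term in the Leibniz expansion of $|\alpha A|$ is $\sgn(\sigma)\prod_{i=1}^m(\alpha\, a_{i\,\sigma(i)})$; since $\alpha$ commutes with every zeon entry and there are exactly $m$ factors, this equals $\alpha^m\,\sgn(\sigma)\prod_{i=1}^m a_{i\,\sigma(i)}$. Summing over $\sigma\in\mathcal{S}_m$ and factoring the common $\alpha^m$ out of the whole sum gives the claim.

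For multiplicativity $|AB|=|A||B|$ I would use the standard column-expansion argument, which is valid over any commutative ring. Writing $A_1,\dots,A_m$ for the columns of $A$, the $j$-th column of $C=AB$ is $\sum_k b_{kj}A_k$. Regarding the determinant as a function of the columns that is multilinear and alternating (both properties being immediate from commutativity in the Leibniz formula), multilinearity yields
\[
|AB|=\sum_{k_1,\dots,k_m} b_{k_1 1}\cdots b_{k_m m}\,\det(A_{k_1},\dots,A_{k_m}).
\]
The alternating property forces $\det(A_{k_1},\dots,A_{k_m})=0$ unless $(k_1,\dots,k_m)$ is a permutation $\sigma$, in which case it equals $\sgn(\sigma)|A|$. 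Collecting the surviving terms gives $|AB|=|A|\sum_{\sigma\in\mathcal{S}_m}\sgn(\sigma)\prod_{j} b_{\sigma(j)\,j}=|A||B|$.

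Finally, the inverse identity is an immediate corollary. If $A$ is invertible then $AA^{-1}=\mI$, so multiplicativity together with $|\mI|=1$ gives $|A|\,|A^{-1}|=1$; since $\CZ$ is commutative this exhibits $|A^{-1}|$ as the two-sided inverse of $|A|$, whence $|A^{-1}|=|A|^{-1}$. I expect no genuine obstacle in any of this — the work is entirely in recognizing that commutativity is exactly what the classical proofs require. The only thing to be careful about is resisting field-style shortcuts (triangularization, cancellation, ``nonzero determinant $\Rightarrow$ invertible'') and instead keeping every step inside the formal, commutative-ring-valid manipulation of the Leibniz sum.
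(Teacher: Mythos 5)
Your proof is correct, and it is worth noting that the paper itself offers no proof of this lemma at all --- it is stated as a standard fact, implicitly justified by the observation that $\CZ$ is a commutative unital ring, so the classical determinant identities carry over verbatim. That observation is exactly the engine of your argument, so you have supplied precisely the justification the paper leaves tacit: homogeneity read off from the Leibniz sum, multiplicativity from multilinearity plus the alternating property (both of which hold over any commutative ring, since the cancellation in the equal-columns argument is an exact pairing of permutation terms and needs no division), and the inverse identity from $|A|\,|A^{-1}|=|\mI|=1$. Your explicit caution about avoiding field-style shortcuts is well placed for this algebra: $\CZ$ has abundant zero divisors, and indeed the paper's own surrounding remarks (e.g., that $A$ is singular iff $|A|$ is \emph{nilpotent}, rather than iff $|A|=0$) show why cancellation and ``nonzero determinant implies invertible'' arguments must be avoided here.
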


\begin{description}
\item [{\bf Zeon matrix inverse.}] Let $A=(a_{ij})$ be a square matrix having entries from $\CZ$, and write $A=\zco A + \mathfrak{D}A$, where $\zco A=(\zco a_{ij})$.  It follows that $A$ is invertible if and only if $\zco A$ is invertible.  In this case, the inverse is given by \begin{equation*}
A^{-1}=(\zco A)^{-1}\sum_{\ell=0}^{\kappa(\mathfrak{D}A(\zco A)^{-1})}(-1)^\ell(\mathfrak{D}A(\zco A)^{-1})^{\ell}.
\end{equation*}
\end{description}

We note that a zeon matrix $A\in\Mat(m,\CZ)$ is singular if and only if its determinant is nilpotent.  Inverses can also be obtained via Gaussian elimination.

\subsubsection*{Zeon Gaussian Elimination}

The effects of elementary row operations on the determinant of an $m\times m$ zeon matrix are as follows:
\begin{enumerate}[i.]
\item $A\mapsto A'$ by exchanging two rows negates the determinant: $|A|=-|A'|$;
\item $A\mapsto A'$ by multiplying a row of $A$ by a zeon constant $u$ changes the determinant by the factor $u$: $|A'|= u|A|$;  
\item adding a zeon multiple of one row to another leaves the determinant unchanged.
\end{enumerate}

Elementary zeon matrices can be defined as matrices obtained from the identity matrix by elementary zeon row operations, and inverse matrices can be computed using Gaussian elimination.

\subsection{The Zeon Module Inner Product}

Let $x,y\in{\CZ}^m$.  Writing $x$ and $y$ as column matrices $y=(y_1, \ldots, y_m)^\intercal$ and \hfill\break $x=(x_1, \ldots, x_m)^\intercal$, the {\em zeon inner product} of $x$ and $y$ is defined by
\begin{equation}\label{inner prod def}
\langle x\vert y\rangle = y^\dag x,
\end{equation}
where $y^\dag=\overline{y}^\intercal$ denotes the complex conjugate transpose of $y$.

While $\langle x\vert x\rangle$ is generally not scalar-valued, the scalar part of $\langle x\vert x\rangle$  defines a seminorm on $\CZ$.   In particular, $\zco \langle x\vert x\rangle\ge 0$ for all $x\in{\CZ}^m$, so we define the {\em spectral seminorm} of $x$ by \begin{equation}
|x|_\star=\zco\langle x\vert x\rangle^{1/2}.
\end{equation}

A zeon vector $v\in{\CZ}^m$ is said to be {\em null} if its spectral seminorm is zero; i.e., $\zco\langle v, v\rangle = 0$, or equivalently, $\langle v,v\rangle\in{\CZ}^\circ$.  A non-null zeon vector $v$ is said to be {\em normalized} if and only if $\langle v,v\rangle =1$.  Any non-null $x\in{\CZ}^m$  can be normalized via the mapping $x\mapsto \hat{x}$ where \begin{equation}
\hat{x}=\langle x\vert x\rangle^{-1/2} x.
\end{equation}

\subsection{Eigenvectors, Eigenvalues, and the Characteristic Polynomial}

Let  $T\in{\rm End}({\CZ}^m)$.   If there exists an element $\lambda\in\CZ$ such that  the kernel of $T-\lambda\mI$ is a nonempty submodule of ${\CZ}^m$, then $\lambda$ is said to be an {\em eigenzeon} of $T$.  Any non-null $x\in \ker(T-\lambda\mI)$ is then said to be a {\em zeon eigenvector} of $T$ corresponding to {\em eigenzeon} $\lambda$.

Regarding endomorphisms of ${\CZ}^m$ as matrices, a non-null zeon vector $\xi\in {\CZ}^m$ is said to be an {\em  eigenvector} of $A\in \Mat(m,\CZ)$ if there exists an element $\lambda\in{\CZ}$ such that $A\xi=\lambda \xi$.  In this case, $\lambda$ is said to be an {\em eigenvalue} of $A$ associated with the eigenvector $\xi$.

Given a square matrix $A\in \Mat(m,\CZ)$,  the {\em characteristic polynomial} of $A$ is defined in the usual way as the zeon polynomial $\chi_A(t)=|t\mI-A|$.  When there exists $\lambda\in\CZ$ such that $\chi_A(\lambda)=0$, there necessarily exists a non-null zeon vector $\xi\in{\CZ}^m$ such that $A\xi =\lambda\xi$.

Some facts about zeon matrices follow.
\begin{enumerate}
\item Let $A\in \Mat(m,\CZ)$.  If $\chi_{\zco A}(u)$ has simple complex zero $u_0$, then $A$ has a unique spectrally simple zeon eigenvalue $\lambda$ satisfying $\zco \lambda = u_0$.

\item If $A$ is invertible, then all eigenvalues of $A$ are invertible.

\item Let $A\in\Mat(m,\CZ)$.  If $\lambda$ is a zeon eigenvalue of $A$ associated with eigenvector $\xi$,  then $\zco \lambda$ is an eigenvalue of $\zco A$ associated with eigenvector $\zco \xi$.

\item The mapping $A\mapsto \zco A$ is an algebra homomorphism  \break $\Mat(m,\CZ)\to {\rm End}(\mathbb{C}^m)$.  Consequently, for any positive integer $k$, $\zco(A^k)=(\zco A)^k$.   Moreover, $A\in\Mat(m,\CZ)$ is singular if and only if $\zco A$ is singular.

\item For all $A\in\Mat(m,\CZ)$,  $\zco |A| = |\zco A|$. 

\item A matrix $A \in\Mat(m,\CZ)$ is nilpotent if and only if $\zco A$ is nilpotent.

\end{enumerate}

\subsection*{Zeon Eigenvectors}

A few remarks concerning linear independence of vectors in ${\CZ}^m$ are in order.  A subset $\{v_1, \ldots, v_k\}$ of ${\CZ}^m$ is said to be {\em linearly independent} if and only if for all coefficients $\alpha_1, \ldots, \alpha_k\in\CZ$, 
\begin{equation*}
\alpha_1v_1+\cdots+\alpha_kv_k=\zm
\end{equation*} 
implies $\alpha_1=\alpha_2=\cdots=\alpha_k=0$.

Note that for any nonzero $v\in {\CZ}^m$, the singleton $\{v\}$ can only be linearly independent if it has at least one invertible component.  That is, $\{v\}$ linearly independent implies $v\notin ({\CZ}^\circ)^m$; equivalently, $\{v\}$ is linearly independent if and only if $\zeta_{[n]}v\ne \zm$.

\begin{proposition}
Let $A\in \Mat(m,\CZ)$.  Let $v_1,v_2\in{\CZ}^m$ be eigenvectors of $A$ associated with zeon eigenvalues $\lambda_1, \lambda_2$, respectively, where $\zco \lambda_1\ne \zco \lambda_2$.  Then, $v_1$ and $v_2$ are linearly independent.
\end{proposition}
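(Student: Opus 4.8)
The plan is to adapt the classical argument that eigenvectors attached to distinct eigenvalues are linearly independent, being careful at the two points where the zeon setting departs from ordinary linear algebra over a field: the inversion of $\lambda_1-\lambda_2$, and the passage from a relation $\alpha v = \zm$ to the conclusion $\alpha = 0$.

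First I would suppose a dependence relation $\alpha_1 v_1 + \alpha_2 v_2 = \zm$ with $\alpha_1,\alpha_2\in\CZ$, and aim to show $\alpha_1=\alpha_2=0$. Applying the $\CZ$-linear map $A$ and using $A v_i = \lambda_i v_i$ together with the commutativity of $\CZ$ yields $\lambda_1\alpha_1 v_1 + \lambda_2 \alpha_2 v_2 = \zm$. Multiplying the original relation through by $\lambda_2$ and subtracting then gives $(\lambda_1-\lambda_2)\,\alpha_1 v_1 = \zm$.

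The first non-classical step is inverting $\lambda_1-\lambda_2$. Here $\zco(\lambda_1-\lambda_2)=\zco\lambda_1-\zco\lambda_2\ne0$ by hypothesis, and by the invertibility criterion recorded in the preliminaries a zeon element is invertible precisely when its scalar part is nonzero; hence $\lambda_1-\lambda_2\in\CZ^\times$. Multiplying each component of $(\lambda_1-\lambda_2)\alpha_1 v_1=\zm$ by $(\lambda_1-\lambda_2)^{-1}$ gives $\alpha_1 v_1 = \zm$, and substituting back into the original relation yields $\alpha_2 v_2 = \zm$.

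The step I expect to be the main obstacle — and the one where the zeon structure genuinely matters — is concluding $\alpha_1 = 0$ from $\alpha_1 v_1 = \zm$; this implication fails for a general vector because of the nilpotents and zero divisors in $\CZ$. The rescue is the standing requirement that an eigenvector be non-null. Non-nullity of $v_1$ means $\zco\langle v_1\vert v_1\rangle = \sum_j |\zco (v_1)_j|^2 > 0$, so at least one component $(v_1)_k$ has nonzero scalar part and is therefore invertible. Reading off the $k$-th component of $\alpha_1 v_1 = \zm$ gives $\alpha_1 (v_1)_k = 0$, and multiplying by $(v_1)_k^{-1}$ forces $\alpha_1 = 0$. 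The identical argument applied to the non-null eigenvector $v_2$ yields $\alpha_2 = 0$, which completes the proof.
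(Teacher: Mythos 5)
Your proof is correct and follows essentially the same route as the paper's: apply $A$ to the dependence relation, eliminate one term using a multiple of the relation, invert $\lambda_1-\lambda_2$ (invertible since its scalar part is nonzero), and then use non-nullity of the eigenvectors to pass from $\alpha_i v_i=\zm$ to $\alpha_i=0$ via an invertible component. The only difference is presentational: you argue directly that $\alpha_1=\alpha_2=0$, while the paper runs the same computation as a proof by contradiction, concluding $v_1,v_2\in(\CZ^\circ)^m$ against the non-nullity requirement.
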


\begin{proof}
Suppose, to the contrary, that for some nonzero constants $\alpha_1, \alpha_2\in\CZ$, the following holds:\begin{equation}\label{orth1}
\zm=\alpha_1v_1+\alpha_2v_2=(\alpha_1v_1+\alpha_2v_2)\lambda_1.
\end{equation}
Further, linearity of $A$ implies
\begin{equation}\label{orth2}
\zm=A(\alpha_1v_1+\alpha_2v_2)=\alpha_1\lambda_1v_2+\alpha_2\lambda_2v_2.
\end{equation}
Subtracting \eqref{orth2} from \eqref{orth1} gives $\alpha_2(\lambda_2-\lambda_1)v_2=\zm$.
Since, $\zco\lambda_1\ne\zco\lambda_2$, we conclude that $\lambda_2-\lambda_1$ is invertible.  Hence, $\alpha_2 v_2=\zm$, which further implies $\alpha_1v_1=\zm$ (by \eqref{orth1}), so that $v_1,v_2\in({\CZ}^\circ)^m$, a contradiction.   
\end{proof}

\begin{remark}
Note that the only zeon eigenvalue that can be associated with the nullspace of matrix $A\in\Mat(m,\CZ)$ is zero.  If $\lambda v=0$ for nonzero $\lambda$, then $\lambda$ is nilpotent and $v\in{{\CZ}^\circ}^m$, violating the requirement that an individual eigenvector be linearly independent.  
\end{remark}

\section{The Zeon Combinatorial Laplacian}\label{zeon Laplacian}

For purposes of the discussion at hand, a {\em graph} on $n$ vertices is an ordered pair $G=(V,E)$ of sets.  Elements of $V=\{v_1, \ldots, v_n\}$ are the {\em vertices} of $G$, and elements of $E\subseteq\{\{v_i, v_j\}: 1\le i<j\le n\}$ are the {\em edges} of $G$.  Two vertices $v_i,v_j\in V$ are {\em adjacent} if there exists an edge $\{v_i,v_j\}\in E$.  

A {\em $k$-walk} $(v_0,\ldots,v_k)$ in a graph $G$ is a sequence of vertices in $G$ with {\em initial vertex} $v_0$ and {\em terminal vertex} $v_k$ such that there exists an edge
$\{v_j,v_{j+1}\}\in E$ for each $0\le j\le k-1$.  A $k$-walk contains $k$ edges.  A {\em path} is a walk in which no vertex appears more than once; equivalently, a path is a {\em self-avoiding} walk.  A {\em closed $k$-walk} is a $k$-walk whose initial vertex is also its terminal vertex.  A {\em $k$-cycle} is a closed $k$-path with the exception $v_0=v_k$.  A {\em Hamiltonian cycle} is an $n$-cycle in a graph on $n$ vertices; i.e., it contains $V$. 

\begin{description}
\item[{\bf PWICs.}] For convenience, we further designate a {\em path with initial cycle} (PWIC)  from $v_0$ to $v_t$ (where $v_0$ and $v_t$ are distinct vertices of $G$) to be a walk containing exactly one cycle based at the initial vertex $v_0$.  In particular, PWICs are walks  in which no vertex is repeated except for the initial vertex $v_0$, which is repeated exactly once.
\end{description}

The significance of PWICs will become evident in the proof of Theorem \ref{nil-structure theorem}.  First, we recall the definition of the nilpotent adjacency matrix associated with a graph.  

\begin{definition}
Let $G=(V,E)$ be a simple graph on $n$ labeled vertices $V=\{v_1, \ldots, v_n\}$ and edges $E$ consisting of unordered pairs of vertices.  The {\em nilpotent adjacency matrix of $G$} is defined as the $n\times n$ matrix $\Psi=(\psi{ij})$ having entries in ${\CZ}$ determined as follows:
\begin{equation*}
\psi_{ij}=\begin{cases}
0&\{v_i, v_j\}\notin E,\\
\zeta_{\{j\}}& \{v_i, v_j\}\in E.
\end{cases}
\end{equation*}
\end{definition}

Recalling Dirac notation,  $\langle i \vert$ will denote the row vector whose entries are all zero, except the $i$th column, which is $1$.  Similarly, $\vert j\rangle$ will denote the column vector with $1$ in the $j$th row and zeros elsewhere.   Further, let us define $\langle \zeta_{\{i\}} \vert$ to denote the row vector whose entries are all zero, except the $i$th column, which is $\zeta_{\{i\}}$.  With this notation in mind, the following result is a refinement of one first established in \cite{ICCA7}.  

\begin{theorem}[Nil-Structure Theorem]\label{nil-structure theorem}
Let $\Psi$ be the nilpotent adjacency matrix of an
$n$-vertex graph $G$, and let $k\in\mathbb{N}$.  For $1\le i\ne j \le n$, 
\begin{equation*}
\left<\zeta_{\{i\}}\vert\Psi^k\vert j\right>=
\sum_{\genfrac{}{}{0pt}{}{I\subseteq V}{|I|=k+1}}\omega_I \zeta_{I},\end{equation*}
where $\omega_I$ denotes the number of $k$-paths from $v_i$ to $v_j$ on vertices indexed by $I$.  Further,  when $1\le i \le n$,
\begin{equation*}
\left<i\vert\Psi^k\vert i\right>=\sum_{\genfrac{}{}{0pt}{}{I\subseteq V}{|I|=k}}\omega_I \zeta_{I},\end{equation*}
where $\omega_I$ denotes the number of $k$-cycles on vertex set $I$ based at $v_i\in I$.  
\end{theorem}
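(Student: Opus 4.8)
The plan is to read off the $(i,j)$ entry of $\Psi^k$ as a generating function over $k$-walks in $G$ and then use the blade product rule \eqref{blade product} to discard every walk that is not self-avoiding. Expanding the matrix product,
\begin{equation*}
\langle i\vert \Psi^k\vert j\rangle=(\Psi^k)_{ij}=\sum_{i_1,\ldots,i_{k-1}}\psi_{i\,i_1}\psi_{i_1\,i_2}\cdots\psi_{i_{k-1}\,j},
\end{equation*}
the sum ranging over all tuples of intermediate indices. Since $\psi_{ab}=\zeta_{\{b\}}$ exactly when $\{v_a,v_b\}\in E$ and is $0$ otherwise, every nonzero summand corresponds to a $k$-walk $(v_i,v_{i_1},\ldots,v_{i_{k-1}},v_j)$ and contributes the product $\zeta_{\{i_1\}}\zeta_{\{i_2\}}\cdots\zeta_{\{i_{k-1}\}}\zeta_{\{j\}}$.

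By \eqref{blade product}, and because each generator squares to zero, this product equals $\zeta_{\{i_1,\ldots,i_{k-1},j\}}$ when the indices $i_1,\ldots,i_{k-1},j$ are pairwise distinct and vanishes otherwise. Hence the only walks that survive are those that never revisit any of the vertices $v_{i_1},\ldots,v_{i_{k-1}},v_j$.

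For the off-diagonal identity I would then left-multiply by $\zeta_{\{i\}}$. The product $\zeta_{\{i\}}\,\zeta_{\{i_1,\ldots,i_{k-1},j\}}$ equals $\zeta_{\{i,i_1,\ldots,i_{k-1},j\}}$ when $i\notin\{i_1,\ldots,i_{k-1},j\}$ and is $0$ otherwise; this single factor simultaneously records the initial vertex in the blade and forbids the walk from returning to $v_i$. The surviving terms are therefore exactly the $k$-walks from $v_i$ to $v_j$ whose $k+1$ vertices are all distinct, namely the $k$-paths from $v_i$ to $v_j$. Each such path occupies a vertex set $I$ with $|I|=k+1$ and contributes one copy of $\zeta_I$, so grouping terms by $I$ produces the coefficient $\omega_I$ equal to the number of $k$-paths from $v_i$ to $v_j$ on $I$.

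For the diagonal identity the terminal index is $j=i$, so the expansion reads $(\Psi^k)_{ii}=\sum \zeta_{\{i_1\}}\cdots\zeta_{\{i_{k-1}\}}\zeta_{\{i\}}$ over closed $k$-walks based at $v_i$, and here the factor $\zeta_{\{i\}}$ coming from the closing edge already records the base vertex. This is precisely why the correct bra is $\langle i\vert$ rather than $\langle\zeta_{\{i\}}\vert$: an extra factor of $\zeta_{\{i\}}$ would annihilate every term. The product is nonzero iff $i_1,\ldots,i_{k-1},i$ are pairwise distinct, i.e.\ the closed walk meets $v_i$ only at its endpoints and meets each other vertex once, which is exactly a $k$-cycle based at $v_i$; such a cycle lies on a set $I$ with $|I|=k$ and contributes $\zeta_I$. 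I expect the principal obstacle to lie in presenting this asymmetry cleanly and matching it to the definitions: the initial vertex must be injected by hand through $\zeta_{\{i\}}$ in the open (path) case but is supplied automatically by the return edge in the closed (cycle) case, and one must verify carefully that the distinctness condition furnished by \eqref{blade product} corresponds exactly to the path and cycle definitions, yielding $|I|=k+1$ and $|I|=k$ respectively.
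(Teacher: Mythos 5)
Your proposal is correct and follows essentially the same route as the paper: you expand $(\Psi^k)_{ij}$ as a sum over $k$-walks, use the null-square blade product to kill any walk repeating a recorded vertex (so the survivors are precisely the paths and the walks revisiting only the unrecorded initial vertex $v_i$ --- what the paper calls PWICs), and then eliminate the latter by left-multiplication by $\zeta_{\{i\}}$ in the off-diagonal case and by the closing factor $\zeta_{\{i\}}$ in the diagonal case. The only cosmetic difference is that the paper phrases the walk classification as a ``straightforward inductive argument'' and names the second walk type explicitly, whereas you make the same mechanism explicit via the direct sum-over-walks expansion.
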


\begin{proof}
Because the generators of $\CZ$ square to zero, a straightforward inductive argument shows that the nonzero terms of $\left<i\vert\Psi^k\vert j\right>$ are multivectors corresponding to two types of $k$-walks from $v_i$ to $v_j$: (i.) paths (i.e., walks with no repeated vertices) and (ii.) PWICs in which $v_i$ is repeated exactly once at some step but are otherwise self-avoiding.   For $i\ne j$, multiplication by $\zeta_{\{i\}}$ cancels walks of the second type, so that nonzero terms represent paths from $v_i$ to $v_j$.   

When $i=j$, walks of the second type are zeroed in the $k^{\rm th}$ step when the walk is closed. Hence, terms of $\left<i\vert \Psi^k\vert i\right>$ represent the collection of $k$-cycles based at $v_i$.
\end{proof}

\begin{remark}
At times, it will also be convenient to use the notation $\langle v \vert$ or $\vert v\rangle$ to denote the unique row (resp. column) unit vector associated with vertex $v$.   
\end{remark}

\begin{definition}[Zeon Laplacian]
Let $\Psi$ denote the nilpotent adjacency matrix of a simple graph $G$ on $m$ vertices, and let $\Delta={\rm diag}(d_1,\ldots, d_m)$ be the diagonal matrix of vertex degrees associated with $G$.  The {\em zeon Laplacian of $G$} is defined by \begin{equation}\label{zeon Laplacian def}
\Lambda=\Delta-\Psi.
\end{equation}
\end{definition}

It is worthwhile to note that $\zco(\Lambda)=\Delta$, which is invertible if and only if all of its diagonal entries are nonzero; hence, we have the following lemma.

\begin{lemma}
The zeon Laplacian of a simple graph $G$ is invertible if and only if $G$ has no isolated vertices.
\end{lemma}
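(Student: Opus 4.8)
The plan is to reduce the question entirely to the scalar part of $\Lambda$ and then to a trivial statement about a diagonal complex matrix. The crucial structural fact is that $\Psi$ is a nilpotent zeon matrix: by definition each entry $\psi_{ij}$ is either $0$ or a generator $\zeta_{\{j\}}$, and in every case $\zco \psi_{ij}=0$. Hence $\zco \Psi=\zm$ (the zero matrix), and from $\Lambda=\Delta-\Psi$ we obtain $\zco \Lambda = \Delta$, exactly as recorded in the remark preceding the lemma. This identifies the complex-valued ``scalar part'' of the Laplacian as the honest degree matrix $\Delta=\mathrm{diag}(d_1,\ldots,d_m)$.

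Next I would invoke the zeon matrix inverse criterion stated above, namely that a matrix $A\in\Mat(m,\CZ)$ is invertible if and only if $\zco A$ is invertible (equivalently, fact~4 in the list: $A$ is singular iff $\zco A$ is singular). Applying this to $A=\Lambda$ immediately gives that $\Lambda$ is invertible if and only if $\Delta$ is invertible. Thus the entire zeon-algebraic content collapses onto a statement about the ordinary complex diagonal matrix $\Delta$.

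The remaining step is purely combinatorial-linear. Since $\Delta$ is diagonal, it is invertible if and only if $\det\Delta=\prod_{i=1}^m d_i\neq 0$, which holds precisely when every $d_i\neq 0$. Finally, one observes that $d_i=0$ means vertex $v_i$ lies on no edge, i.e.\ $v_i$ is isolated. Chaining these equivalences yields: $\Lambda$ invertible $\iff$ $\Delta$ invertible $\iff$ all $d_i\neq 0$ $\iff$ $G$ has no isolated vertex.

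There is no real obstacle here; the argument is essentially a three-line chain of equivalences. The only point requiring a moment's care is the very first one, verifying $\zco \Psi=\zm$ so that $\zco\Lambda=\Delta$ rather than something with a residual scalar contribution. Once that is in hand, everything else follows mechanically from the invertibility criterion for zeon matrices and the elementary fact that a diagonal matrix is nonsingular exactly when no diagonal entry vanishes.
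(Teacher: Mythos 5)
Your proof is correct and follows exactly the paper's own reasoning: the paper's justification is precisely the observation that $\zco(\Lambda)=\Delta$, combined with the zeon matrix inverse criterion ($A$ invertible iff $\zco A$ invertible) and the fact that the diagonal degree matrix is nonsingular exactly when no vertex is isolated. Nothing is missing; your write-up simply makes explicit the steps the paper compresses into the remark preceding the lemma.
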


Given a multi-index $I$, it is convenient to define \begin{equation*}
\deg(v_I)=\prod_{j\in I}\deg (v_j).
\end{equation*}

\subsection*{Row Sums of the Zeon Laplacian}

A feature of the ordinary combinatorial Laplacian is that all row sums are zero.  Such is not the case for the zeon combinatorial Laplacian.  However, vertex degree is directly related to the index of nilpotency of the dual part of the row sum.

Letting $\Lambda=(\lambda_{ij})\in\Mat(\CZ,m)$ be the zeon combinatorial Laplacian of a graph $G$ on $m$ vertices, the $i$th {\em row sum} $r_i$ of $\Lambda$ is 
\begin{equation*}
r_i=\sum_{j=1}^m \lambda_{ij}= \deg(v_i)-\sum_{\{j: \{v_j,v_i\}\in E\}}\zeta_{\{j\}},
\end{equation*}
so that $\zco r_i=\deg(v_i)$ and $\zdu r_i=-\sum_{\{j: v_j\in\mathcal{N}(v_i)\}}\zeta_{\{j\}}$.  It follows that \begin{equation}\label{row sum dual part}
(\zdu r_i)^{\deg(v_i)}=|\mathcal{N}(v_i)|!\zeta_{I},
\end{equation}
where $I=\{\ell: v_\ell\in\mathcal{N}(v_i)\}$; i.e., $I$ is the set of indices of neighbors of $v_i$ in the graph $G$.  Further, the index of nilpotency of $\zdu(r_i)$ is $\kappa(\zdu(r_i))=\deg(v_i)+1$.   

Hence, 
\begin{equation}\label{row sum inverse}
{r_i}^{-1}=\frac{1}{\deg(v_i)}\sum_{k=0}^{\deg(v_i)}\left(\frac{-\zdu(r_i)}{\deg(v_i)}\right)^k
\end{equation}
and by applying Taylor series expansion about $\zco r_i=\deg(v_i)$, we obtain 
\begin{equation}\label{row sum exponential}
\exp(r_i)=\sum_{k=0}^{\deg(v_i)}\frac{\exp(\deg v_i)}{k!}\zdu(r_i)^k.
\end{equation}

\subsection{Laplacian Eigenvalues}

\begin{lemma}
Let $\Lambda$ be the zeon combinatorial Laplacian of a simple connected finite graph $G=(V,E)$ on $m$ vertices, and suppose that for some positive integer $k$, $G$ has a unique vertex $v_0$ of degree $k$.  Then $\Lambda$ has a  spectrally simple eigenvalue $\lambda$  whose scalar part is $k$; i.e., $\zco \lambda=\deg(v_0)$.
\end{lemma}

\begin{proof}
First, we note that
\begin{align*}
\zco\chi_{\Lambda}(t)=\zco \vert t\mI-\Lambda\vert&=\vert t\mI-\zco\Lambda\vert\\
&=\chi_{\zco \Lambda}(t)\\
&=\prod_{v\in V}^m (t-\deg(v))\\
&=(t-\deg(v_0))\prod_{v\ne v_0}(t-\deg(v)).
\end{align*}
Hence, $\lambda_0=\deg(v_0)$ is a simple zero of $\zco\chi_\Lambda(u)$, implying the existence of spectrally simple zero $\lambda=\lambda_0+\zdu \lambda$ of the characteristic polynomial $\chi_{\Lambda}(u)$.  
\end{proof}

\begin{theorem}\label{laplacian eigenvalue}
Let $G=(V,E)$ be a simple finite graph on $m$ vertices $\{v_1, \ldots, v_m\}$ and suppose that for some positive integer $d$, the graph has a unique vertex $v_k$ of degree $d$.  It follows that the zeon combinatorial Laplacian $\Lambda$ of $G$ has a spectrally simple eigenvalue $\lambda$ satisfying \begin{equation*}
\lambda = d + \sum_{I\subseteq [m]}\omega_I\zeta_{I}\prod_{j\in I\setminus\{k\}}(d-\deg(v_j))^{-1},
\end{equation*}
where $(-1)^{|I|}\omega_I$ is the number of cycles based at $v_k$ on vertices indexed by $I$.
\end{theorem}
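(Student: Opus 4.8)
The plan is to locate $\lambda$ as the spectrally simple zero of $\chi_\Lambda$ with scalar part $d$ already guaranteed by the preceding lemma, and to compute its nilpotent part $\nu=\zdu\lambda$ explicitly from the singularity condition $|\lambda\mI-\Lambda|=0$. Writing $\lambda=d+\nu$ and $M=\lambda\mI-\Lambda=(\lambda\mI-\Delta)+\Psi$, I would first record the structure of $M$: since $\Psi$ has zero diagonal, $M_{ii}=\lambda-\deg(v_i)$ while $M_{ij}=\psi_{ij}$ off the diagonal. The hypothesis that $v_k$ is the \emph{unique} vertex of degree $d$ makes the corner entry purely nilpotent, $M_{kk}=\lambda-d=\nu$, and makes every other diagonal entry invertible, since $\zco M_{ii}=d-\deg(v_i)\neq 0$ for $i\neq k$.

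This suggests partitioning the indices as $\{k\}$ versus $R=[m]\setminus\{k\}$ and using a Schur-complement expansion of the determinant, writing $M$ in block form with corner $M_{kk}=\nu$, row $\Psi_{kR}$, column $\Psi_{Rk}$, and block $M_{RR}$. I would observe that $\zco M_{RR}=\mathrm{diag}(d-\deg(v_j))_{j\in R}$ has nonzero diagonal, so $M_{RR}$ is invertible and $|M_{RR}|$ is a unit by the Zeon matrix inverse criterion. Multiplicativity of the determinant (the determinant lemma above) yields the Schur identity $|M|=|M_{RR}|\,\bigl(\nu-\Psi_{kR}\,M_{RR}^{-1}\,\Psi_{Rk}\bigr)$, which reduces the eigenvalue condition $|M|=0$ to the single scalar equation
\begin{equation*}
\nu=\Psi_{kR}\,M_{RR}^{-1}\,\Psi_{Rk}.
\end{equation*}

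Next I would expand $M_{RR}^{-1}$ as its finite Neumann series $\sum_{p\ge 0}(-1)^p(D_R^{-1}\Psi_{RR})^pD_R^{-1}$, where $D_R=\mathrm{diag}(\lambda-\deg(v_j))_{j\in R}$, and read the product combinatorially. A term indexed by intermediate vertices $a_0,\dots,a_p\in R$ carries the blade $\zeta_{\{a_0\}}\cdots\zeta_{\{a_p\}}\zeta_{\{k\}}$ arising from the edges of the closed walk $k\to a_0\to\cdots\to a_p\to k$, together with the scalar weight $(-1)^p\prod_i(\lambda-\deg(v_{a_i}))^{-1}$. The null-square relations force the surviving terms to be self-avoiding, i.e.\ genuine cycles based at $v_k$; grouping by vertex set $I$ and invoking the Nil-Structure Theorem to identify the number $c_I$ of such cycles gives
\begin{equation*}
\nu=\sum_{I\subseteq[m]}(-1)^{|I|}\,c_I\,\zeta_I\prod_{j\in I\setminus\{k\}}(\lambda-\deg(v_j))^{-1}.
\end{equation*}

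The main obstacle is that $\lambda=d+\nu$ still appears inside the denominators, so this is only an implicit description of $\nu$. The observation that resolves it is that the rightmost factor $\Psi_{Rk}$ has every entry equal to $0$ or $\zeta_{\{k\}}$, so, by commutativity, the solution necessarily has the form $\nu=\zeta_{\{k\}}w$ for some $w\in\CZ$; hence $\zeta_I\nu=0$ whenever $k\in I$. Expanding each denominator $(\lambda-\deg(v_j))^{-1}=((d-\deg(v_j))+\nu)^{-1}$ geometrically, every correction term carries a factor $\nu$, hence a factor $\zeta_{\{k\}}$, which annihilates against the $\zeta_{\{k\}}$ already present in $\zeta_I$ (as $k\in I$ for any cycle based at $v_k$). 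Thus $\lambda$ may be replaced by $d$ throughout the denominators without changing the value, yielding
\begin{equation*}
\nu=\sum_{I\subseteq[m]}(-1)^{|I|}\,c_I\,\zeta_I\prod_{j\in I\setminus\{k\}}(d-\deg(v_j))^{-1}.
\end{equation*}
Setting $\omega_I=(-1)^{|I|}c_I$, so that $(-1)^{|I|}\omega_I=c_I$ is the number of cycles based at $v_k$ on $I$, and recalling $\lambda=d+\nu$, gives exactly the claimed formula. I would close by noting that uniqueness of $\lambda$ is furnished by the Fundamental Theorem of Zeon Algebra, since $d$ is a simple zero of $\zco\chi_\Lambda$.
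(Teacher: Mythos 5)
Your proof is correct, but it takes a genuinely different route from the paper's. The paper proves the theorem by running zeon Gaussian elimination on $\lambda\mI-\Lambda$ (with the special vertex relabeled $v_m$) and establishing, via a two-part induction, combinatorial interpretations of \emph{every} entry of every intermediate matrix $\tau_k$ in terms of paths, PWICs, and cycles; the eigenvalue formula then emerges from the vanishing of the last diagonal entry of the triangular form. You compress all of this into a single Schur-complement step: writing $M=\lambda\mI-\Lambda$ and $R=[m]\setminus\{k\}$, invertibility of $M_{RR}$ (its scalar part is ${\rm diag}(d-\deg(v_j))_{j\in R}$) and multiplicativity of the zeon determinant give
\begin{equation*}
0=\vert M\vert = \vert M_{RR}\vert\bigl(\zdu\lambda-\Psi_{kR}{M_{RR}}^{-1}\Psi_{Rk}\bigr),
\end{equation*}
and the finite Neumann expansion of ${M_{RR}}^{-1}$ turns $\Psi_{kR}{M_{RR}}^{-1}\Psi_{Rk}$ into a sum over closed walks based at $v_k$, which the null-square relations prune to cycles, each with sign $(-1)^{|I|}$ and weight $\prod_{j\in I\setminus\{k\}}(\lambda-\deg(v_j))^{-1}$. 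Both arguments finish with the same annihilation trick --- $\zdu\lambda$ is a multiple of $\zeta_{\{k\}}$, so the copies of $\lambda$ hiding in the denominators may be replaced by $d$ --- but your derivation of that fact (every entry of $\Psi_{Rk}$ is $0$ or $\zeta_{\{k\}}$) is more immediate than the paper's, which reads it off the last column of the triangularized matrix. What your route buys is brevity and transparency: one block factorization and one walk expansion replace the paper's four-case inductive bookkeeping. What the paper's route buys is reusable intermediate data: the path/PWIC descriptions of the entries of $\tau_k$ are cited again in the proofs of Theorem \ref{Laplacian eigenvector} and Theorem \ref{Lambda_sym eigenvalue}, so its heavier machinery pays off later in the paper. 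Two minor cautions, neither affecting correctness: your ``scalar weights'' $(\lambda-\deg(v_{a_i}))^{-1}$ are zeon-valued until the final substitution (harmless, since $\CZ$ is commutative), and the Nil-Structure Theorem as stated concerns powers of $\Psi$ rather than your weighted product, so strictly you are re-running its (easy) self-avoidance argument rather than quoting it.
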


\begin{proof}
Begin by relabeling the vertices $\{v_1, \ldots, v_m\}$ if necessary so that $v_m$ is the unique vertex of degree $d$, and let $\Lambda$ be the zeon combinatorial Laplacian of $G$ after relabeling.  Since $v_m$ is the unique vertex of degree $d$, the first $m-1$ main diagonal elements of $\lambda\mI-\Lambda$ are invertible, while the last main diagonal element is nilpotent.   In particular, letting $d_j$ denote the degree of vertex $v_j$ and writing $\ell_j=\lambda-d_j$ for $j=1, \ldots, m-1$  , we have
\begin{equation}\label{form to triangularize}
\lambda\mI-\Lambda=\left(\begin{matrix}
\ell_1&\varepsilon_{12}\zeta_{\{2\}}&\cdots&\cdots&\varepsilon_{1m}\zeta_{\{m\}}\\
\varepsilon_{12}\zeta_{\{1\}}&\ell_2&\varepsilon_{13}\zeta_{\{3\}}&\cdots&\varepsilon_{2m}\zeta_{\{m\}}\\
\varepsilon_{13}\zeta_{\{1\}}&\varepsilon_{23}\zeta_{\{2\}}&\ell_3&\cdots&\varepsilon_{3m}\zeta_{\{m\}}\\
\vdots&\vdots&\vdots&\ddots&\vdots\\
\varepsilon_{1m}\zeta_{\{1\}}&\cdots&\cdots&\cdots&\zdu \lambda
\end{matrix}\right),  
 \end{equation}
where $\varepsilon_{ij}$ denotes the function $V\times V\to \{0,1\}$ defined by 
\begin{equation*}
\varepsilon_{ij}=\begin{cases}
1&\text{\rm if }\{v_i,v_j\}\in E,\\
0&\text{\rm otherwise.}
\end{cases}
\end{equation*}

It follows that by simply adding zeon multiples of rows to each other, $\lambda\mI-\Lambda$ can be placed into upper triangular form  \begin{equation}\label{triangular form}
\tau(\lambda\mI-\Lambda)=\left(\begin{matrix}
\alpha_1&*&\cdots&*&\eta_1\\
0&\alpha_2&\cdots&*&\eta_2\\
0&0&\ddots&\vdots&\vdots\\
\vdots&\vdots&0&\alpha_{m-1}&\eta_{m-1}\\
0&0&\cdots&0&\eta_m
\end{matrix}\right),  
 \end{equation}
where $\alpha_\ell\in\CZ^\times$ and $\eta_\ell\in\CZ^\circ$ for $\ell=1, \ldots, m$.  Further, since the only invertible elements of $\lambda\mI-\Lambda$ lie along the main diagonal, the only nilpotent column of $\lambda\mI-\Lambda$ is the $m$th column.   Since zeros are obtained below each element along the main diagonal by adding a zeon multiple of one row to another, the determinant of the resulting matrix is unchanged.
Hence, 
\begin{equation*}
0=\vert \lambda\mI-\Lambda\vert=\vert \tau(\lambda\mI-\Lambda)\vert=\eta_m\prod_{\ell=1}^{m-1}\alpha_\ell,
\end{equation*}
implying $\eta_m=0$.

Now set $\tau_0=\lambda\mI-\Lambda$ and let $\tau_k$ be the matrix obtained after $k$ steps of the triangularization process for $k=1, \ldots, m-1$.  

\begin{description}
\item [{\bf Claim 1.}] The off-diagonal elements of $\tau_k$ are given by the following:
\begin{equation}\label{off-diag}
(\tau_k)_{ij}=\begin{cases}
0&  i>j, j\le k;\\
\varepsilon_{ij}\zeta_{\{j\}}-\sum_{I\subseteq [k]}\gamma_{I\cup\{j\}}\zeta_{I\cup\{j\}}\prod_{t\in I\cap[k]}{\ell_t}^{-1}& i > j > k;\\
\varepsilon_{ij}\zeta_{\{j\}}-\sum_{I\subseteq [k-1]}\gamma_{I\cup\{j\}}\zeta_{I\cup\{j\}}\prod_{t\in I\cap[i-1]}{\ell_t}^{-1}&  i< j, j>k;\\
\varepsilon_{ij}\zeta_{\{j\}}-\sum_{I\subseteq [i-1]}\gamma_{I\cup\{j\}}\zeta_{I\cup\{j\}}\prod_{t\in I\cap[i-1]}{\ell_t}^{-1} &i<j\le k;
\end{cases}
\end{equation}where $(-1)^{|I\cup\{j\}|}\gamma_{I\cup\{j\}}$ is the number of paths from $v_i$ to $v_j$ on vertices indexed by $I\cup\{j\}$ (excluding the initial vertex $v_i$; i.e.,  $i\notin I\cup\{j\}$).  
\end{description}

Proof of the claim is by induction on $k$.  When $k=0$, the result holds by definition of $\tau_0$.  Now suppose \eqref{off-diag} is valid for all off-diagonal entries of $(\tau_{k-1})$.   
We now proceed by cases.

\begin{description}
\item [{\bf Case 1. ($i>j$ and $j\le k$)}]
Zeros are obtained below $(\tau_{k-1})_{kk}$ via Gaussian elimination using the inverse $((\tau_{k-1})_{kk})^{-1}$ in the usual way since we assume all diagonal entries (except possibly the last one) are invertible.  This gives $(\tau_k)_{ij}=0$ whenever $j<i$ and $j\le k$.
\end{description}

\begin{description}
\item [{\bf Case 2. ($i>j> k$)}]

We consider now those entries below the main diagonal that have not yet been zeroed out.  If $v_i$ is not adjacent to $v_k$, then $(\tau_{k-1})_{ik}=0$, so that $(\tau_k)_{ij}=(\tau_{k-1})_{ij}$; i.e., no change is made to row $i$ in the triangularization $\tau_{k-1}\to\tau_k$.  On the other hand, if $v_i$ is adjacent to $v_k$, then $\varepsilon_{ik}\ne 0$ and \begin{equation}
(\tau_k)_{ij}=(\tau_{k-1})_{ij}-\frac{(\tau_{k-1})_{ik}}{(\tau_{k-1})_{kk}}(\tau_{k-1})_{kj}.
\end{equation}  
Writing $(\tau_{k-1})_{kk}=\ell_k-\xi$, where $\xi\in\CZ^\circ$ is a $\CZ$-linear combination of elements annihilated by $\zeta_{\{k\}}$, it follows that $((\tau_{k-1})_{kk})^{-1}=\frac{1}{\ell_k}+\frac{1}{{\ell_k}^2}\xi$.  
Further, by the inductive hypothesis, $(\tau_{k-1})_{ik}$ is a $\CZ$-linear combination of elements annihilated by $\zeta_{\{k\}}$. Consequently, \begin{equation*}
\frac{(\tau_{k-1})_{ik}}{(\tau_{k-1})_{kk}}=(\tau_{k-1})_{ik}\left(\frac{1}{\ell_k}+\frac{1}{{\ell_k}^2}\xi\right)=\frac{(\tau_{k-1})_{ik}}{\ell_k}.
\end{equation*}
Hence, \begin{align*}
(\tau_k)_{ij}&=(\tau_{k-1})_{ij}-\frac{(\tau_{k-1})_{ik}}{\ell_k}(\tau_{k-1})_{kj}\\
&=\varepsilon_{ij}\zeta_{\{j\}}-\sum_{I\subseteq [k-1]}\gamma_{I\cup\{j\}}\zeta_{I\cup\{j\}}\prod_{t\in I\cap[k-1]}{\ell_t}^{-1}\\
&\hskip10pt-{\ell_k}^{-1}(\tau_{k-1})_{ik}(\tau_{k-1})_{kj}\\
&=\varepsilon_{ij}\zeta_{\{j\}}-\sum_{I\subseteq [k-1]}\gamma_{I\cup\{j\}}\zeta_{I\cup\{j\}}\prod_{t\in I\cap[k-1]}{\ell_t}^{-1}\\
&\hskip10pt-{\ell_k}^{-1}\left(\varepsilon_{ik}\zeta_{\{k\}}-\sum_{I\subseteq [k-1]}{\gamma_{I\cup\{k\}}}'\zeta_{I\cup\{k\}}\prod_{t\in I\cap[k-1]}{\ell_t}^{-1}\right)\\
&\hskip10pt\times\left(\varepsilon_{kj}\zeta_{\{j\}}-\sum_{I\subseteq [k-1]}{\gamma_{I\cup\{j\}}}''\zeta_{I\cup\{j\}}\prod_{t\in I\cap[k-1]}{\ell_t}^{-1}\right),
\end{align*}
where $(-1)^{|I\cup\{j\}|}\gamma_{I\cup\{j\}}$ is the number of paths from $v_i$ to $v_j$ on vertices indexed by $I\cup\{j\}$, $(-1)^{|I|\cup\{k\}}{\gamma_{I\cup\{k\}}}'$ is the number of paths from $v_i$ to $v_k$ on vertices indexed by $I\cup\{k\}$, and $(-1)^{|I|\cup\{j\}}{\gamma_{I\cup\{j\}}}''$ is the number of paths from $v_k$ to $v_j$ on vertices indexed by $I\cup\{j\}$, respectively.

Expanding the right-hand side of the final equality, we obtain 
\begin{align*}
(\tau_k)_{ij}&=\varepsilon_{ij}\zeta_{\{j\}}-\sum_{I\subseteq [k-1]}\gamma_{I\cup\{j\}}\zeta_{I\cup\{j\}}\prod_{t\in I\cap[k-1]}{\ell_t}^{-1}\\
&\hskip10pt-{\ell_k}^{-1}\varepsilon_{ik}\varepsilon_{kj}\zeta_{\{j,k\}}\\
&\hskip10pt+{\ell_k}^{-1}\varepsilon_{ik}\zeta_{\{k\}}\sum_{I\subseteq [k-1]}{\gamma_{I\cup\{j\}}}''\zeta_{I\cup\{j\}}\prod_{t\in I\cap[k-1]}{\ell_t}^{-1}\\
&\hskip10pt+{\ell_k}^{-1}\varepsilon_{kj}\zeta_{\{j\}}\sum_{I\subseteq [k-1]}{\gamma_{I\cup\{k\}}}'\zeta_{I\cup\{k\}}\prod_{t\in I\cap[k-1]}{\ell_t}^{-1}\\
&\hskip10pt-{\ell_k}^{-1}\left[\sum_{I\subseteq [k-1]}{\gamma_{I\cup\{k\}}}'\zeta_{I\cup\{k\}}\prod_{t\in I\cap[k-1]}{\ell_t}^{-1}\right]\\
&\hskip10pt\times\left[\sum_{I\subseteq [k-1]}{\gamma_{I\cup\{j\}}}''\zeta_{I\cup\{j\}}\prod_{t\in I\cap[k-1]}{\ell_t}^{-1}\right].
\end{align*}

Note that $\varepsilon_{ij}\zeta_{\{j\}}$ represents the unique one-step path from $v_i$ to $v_j$ when $v_i$ and $v_j$ are adjacent.  Further, $-{\ell_k}^{-1}\varepsilon_{ik}\varepsilon_{kj}\zeta_{\{j,k\}}$ represents the unique two-step path $v_i\to v_k\to v_j$ when $\{v_i,v_k\}, \{v_k,v_j\}\in E$.  

The quantity \begin{equation*}
+{\ell_k}^{-1}\varepsilon_{ik}\zeta_{\{k\}}\sum_{I\subseteq [k-1]}{\gamma_{I\cup\{j\}}}''\zeta_{I\cup\{j\}}\prod_{t\in I\cap[k-1]}{\ell_t}^{-1}
\end{equation*}
represents paths $v_i\to v_j$ on vertices indexed by subsets of $[k]$ passing through vertex $v_k$ in the second step.

The quantity \begin{equation*}
{\ell_k}^{-1}\varepsilon_{kj}\zeta_{\{j\}}\sum_{I\subseteq [k-1]}{\gamma_{I\cup\{k\}}}'\zeta_{I\cup\{k\}}\prod_{t\in I\cap[k-1]}{\ell_t}^{-1}
\end{equation*}
represents paths $v_i\to v_j$ on vertices indexed by subsets of $[k]$ passing through vertex $v_k$ in the penultimate step.

Finally, the quantity 
\begin{align*}
-{\ell_k}^{-1}\left[\sum_{I\subseteq [k-1]}{\gamma_{I\cup\{k\}}}'\zeta_{I\cup\{k\}}\prod_{t\in I\cap[k-1]}{\ell_t}^{-1}\right]\\
 \times
\left[\sum_{I\subseteq [k-1]}{\gamma_{I\cup\{j\}}}''\zeta_{I\cup\{j\}}\prod_{t\in I\cap[k-1]}{\ell_t}^{-1}\right]
\end{align*}
represents the product of walks $v_i\to v_k$ with walks from $v_k\to v_j$ on vertices indexed by a subset of $[k]$ which pass though $v_k$ in some intermediate step.

\end{description}

\begin{description}
\item [{\bf Case 3. ($i<j$ and $j>k$)}]

Considering entries $(\tau_k)_{ij}$ above the main diagonal with $j>k$,  the proof proceeds in similar fashion to that of Case 2.   In particular, 
 \begin{align*}
(\tau_k)_{ij}&=(\tau_{k-1})_{ij}-\frac{(\tau_{k-1})_{ik}}{\ell_k}(\tau_{k-1})_{kj}\\
&=\varepsilon_{ij}\zeta_{\{j\}}-\sum_{I\subseteq [k-2]}\gamma_{I\cup\{j\}}\zeta_{I\cup\{j\}}\prod_{t\in I\cap[i-1]}{\ell_t}^{-1}\\
&\hskip10pt-{\ell_k}^{-1}(\tau_{k-1})_{ik}(\tau_{k-1})_{kj}\\
&=\varepsilon_{ij}\zeta_{\{j\}}-\sum_{I\subseteq [k-2]}\gamma_{I\cup\{j\}}\zeta_{I\cup\{j\}}\prod_{t\in I\cap[i-1]}{\ell_t}^{-1}\\
&\hskip10pt-{\ell_k}^{-1}\left(\varepsilon_{ik}\zeta_{\{k\}}-\sum_{I\subseteq [i-1]}{\gamma_{I\cup\{k\}}}'\zeta_{I\cup\{k\}}\prod_{t\in I\cap[i-1]}{\ell_t}^{-1}\right)\\
&\hskip10pt\times\left(\varepsilon_{kj}\zeta_{\{j\}}-\sum_{I\subseteq [k-2]}{\gamma_{I\cup\{j\}}}''\zeta_{I\cup\{j\}}\prod_{t\in I\cap[k-1]}{\ell_t}^{-1}\right),
\end{align*}
where $(-1)^{|I\cup\{j\}|}\gamma_{I\cup\{j\}}$ is the number of paths and PWICs from $v_i$ to $v_j$ on vertices indexed by $I\cup\{j\}$, $(-1)^{|I|\cup\{k\}}{\gamma_{I\cup\{k\}}}'$ is the number of paths and PWICs from $v_i$ to $v_k$ on vertices indexed by $I\cup\{k\}$, and $(-1)^{|I|\cup\{j\}}{\gamma_{I\cup\{j\}}}''$ is the number of paths and PWICs from $v_k$ to $v_j$ on vertices indexed by $I\cup\{j\}$, respectively.
Expanding the right-hand side of the final equality, we obtain 
\begin{align*}
(\tau_k)_{ij}&=\varepsilon_{ij}\zeta_{\{j\}}-\sum_{I\subseteq [k-2]}\gamma_{I\cup\{j\}}\zeta_{I\cup\{j\}}\prod_{t\in I\cap[i-1]}{\ell_t}^{-1}\\
&\hskip10pt-{\ell_k}^{-1}\varepsilon_{ik}\varepsilon_{kj}\zeta_{\{j,k\}}\\
&\hskip10pt+{\ell_k}^{-1}\varepsilon_{ik}\zeta_{\{k\}}\sum_{I\subseteq [k-2]}{\gamma_{I\cup\{j\}}}''\zeta_{I\cup\{j\}}\prod_{t\in I\cap[k-1]}{\ell_t}^{-1}\\
&\hskip10pt+{\ell_k}^{-1}\varepsilon_{kj}\zeta_{\{j\}}\sum_{I\subseteq [i-1]}{\gamma_{I\cup\{k\}}}'\zeta_{I\cup\{k\}}\prod_{t\in I\cap[i-1]}{\ell_t}^{-1}\\
&\hskip10pt-{\ell_k}^{-1}\left[\sum_{I\subseteq [k-2]}{\gamma_{I\cup\{k\}}}'\zeta_{I\cup\{k\}}\prod_{t\in I\cap[k-1]}{\ell_t}^{-1}\right]\\
&\hskip10pt\times\left[\sum_{I\subseteq [i-1]}{\gamma_{I\cup\{j\}}}''\zeta_{I\cup\{j\}}\prod_{t\in I\cap[i-1]}{\ell_t}^{-1}\right].
\end{align*}

Recall that $\varepsilon_{ij}\zeta_{\{j\}}$ represents the unique one-step path from $v_i$ to $v_j$ when $v_i$ and $v_j$ are adjacent.  Further, $-{\ell_k}^{-1}\varepsilon_{ik}\varepsilon_{kj}\zeta_{\{j,k\}}$ represents the unique two-step path $v_i\to v_k\to v_j$ when $\{v_i,v_k\}, \{v_k,v_j\}\in E$.  

Since $(-1)^{|I|\cup\{j\}}{\gamma_{I\cup\{j\}}}''$ is the number of paths and PWICs $v_k\to v_j$, and $\varepsilon_{ik}=1$ if and only if $v_i$ is adjacent to $v_k$, the quantity \begin{equation*}
{\ell_k}^{-1}\varepsilon_{ik}\zeta_{\{k\}}\sum_{I\subseteq [k-2]}{\gamma_{I\cup\{j\}}}''\zeta_{I\cup\{j\}}\prod_{t\in I\cap[k-1]}{\ell_t}^{-1}
\end{equation*}
represents paths and PWICs $v_i\to v_j$ on vertices indexed by subsets of $[k]$ passing through vertex $v_k$ in the second step.

By similar reasoning, the quantity \begin{equation*}
{\ell_k}^{-1}\varepsilon_{kj}\zeta_{\{j\}}\sum_{I\subseteq [i-1]}{\gamma_{I\cup\{k\}}}'\zeta_{I\cup\{k\}}\prod_{t\in I\cap[i-1]}{\ell_t}^{-1}\end{equation*}
represents paths $v_i\to v_j$ on vertices indexed by subsets of $[k]$ passing through vertex $v_k$ in the penultimate step.

Finally, the quantity 
\begin{align*}
-{\ell_k}^{-1}\left[\sum_{I\subseteq [k-2]}{\gamma_{I\cup\{k\}}}'\zeta_{I\cup\{k\}}\prod_{t\in I\cap[k-1]}{\ell_t}^{-1}\right]\\
\times\left[\sum_{I\subseteq [i-1]}{\gamma_{I\cup\{j\}}}''\zeta_{I\cup\{j\}}\prod_{t\in I\cap[i-1]}{\ell_t}^{-1}\right]
\end{align*}
represents the product of walks $v_i\to v_k$ with walks from $v_k\to v_j$ on vertices indexed by a subset of $[k]$ which pass though $v_k$ in some intermediate step.  Summing the quantities establishes the stated result for Case 3.
\end{description}

\begin{description}
\item [{\bf Case 4. ($i<j\le k$)}]
Case 4 follows from Case 3 by observing that once zeros are obtained below diagonal $(\tau_k)_{kk}$, rows 1 through $k$ remain fixed; i.e., $(\tau_n)_{ij}=(\tau_k)_{ij}$ for $1\le i\le k$ and $k\le n\le m$.   Hence, the only values ${\ell_t}^{-1}$ appearing in terms of $(\tau_k)_{ij}$ are indexed by $t\in [i-1]$. 
\end{description}

This concludes the proof of Claim 1.  We now turn to elements along the main diagonal of $\tau_k$.

\begin{description}
\item [{\bf Claim 2.}] The diagonal elements of $\tau_k$ are given by the following:
\begin{equation}\label{diag elts}
(\tau_k)_{jj}=\begin{cases}\ell_j-\sum_{I\subseteq [k]}\omega_{I\cup\{j\}}\zeta_{I\cup\{j\}}\prod_{t\in I\cap[k]}{\ell_t}^{-1}&j>k,\\
\ell_j-\sum_{I\subseteq [j]}{\omega_I}'\zeta_I\prod_{t\in I\cap[j-1]}{\ell_t}^{-1}&j\le k,
\end{cases}
\end{equation} where $(-1)^{|I\cup\{j\}|}\omega_{I\cup\{j\}}$ is the number of cycles based at $v_j$ on vertices indexed by $I\cup\{j\}$ and $(-1)^{|I|}{\omega_I}'$ is the number of cycles based at $v_j$ on vertices indexed by $I$.  
\end{description}

Proof of this claim is again by induction on $k$.  When $k=0$, the result holds by definition of $\tau_0$.  Now suppose \eqref{off-diag} is valid for all off-diagonal entries of $(t_{k-1})$ and that \eqref{diag elts} is valid for diagonal elements of $(\tau_{k-1})$.   If $v_k$ is not adjacent to $v_j$, we see that $(\tau_k)_{jj}=(\tau_{k-1})_{jj}$, and there is nothing to prove.  On the other hand, if $v_k$ is adjacent to $v_j$, we have 
\begin{equation}\label{diagonal recurrence}
(\tau_k)_{jj}=(\tau_{k-1})_{jj}-\frac{(\tau_{k-1})_{jk}}{(\tau_{k-1})_{kk}}(\tau_{k-1})_{kj}.
\end{equation}  

We now continue by cases.

\begin{description}
\item [{\bf Case 1. ($j>k$)}]
When $j>k$, \eqref{diagonal recurrence} becomes
\begin{align*}
(\tau_k)_{jj}&=\ell_j-\sum_{I\subseteq [k-1]\cup\{j\}}\omega_I\zeta_I\prod_{t\in I}{\ell_t}^{-1}-\frac{1}{\ell_k}(\tau_{k-1})_{jk}(\tau_{k-1})_{kj}
\end{align*}
where $(-1)^{|I|}\omega_I$ is the number of cycles based at $v_j$ on vertices indexed by $I\subseteq[k-1]\cup\{j\}$ and \eqref{off-diag} gives
\begin{align*}
(\tau_{k-1})_{jk}(\tau_{k-1})_{kj}&=\left(\zeta_{\{k\}}-\sum_{I\subseteq [k-1]\cup\{k\}}\gamma_I\zeta_I\prod_{t\in I\setminus\{k-1\}}{\ell_t}^{-1}\right)\\
&\hskip10pt\times\left(\zeta_{\{j\}}-\sum_{I\subseteq [k-1]\cup\{j\}}{\gamma_I}'\zeta_I\prod_{t\in I\setminus\{k-1\}}{\ell_t}^{-1}\right)\\
&=\zeta_{\{j,k\}}-\zeta_{\{k\}}\sum_{I\subseteq [k-1]\cup\{j\}}{\gamma_I}'\zeta_I\prod_{t\in I\setminus\{k-1\}}{\ell_t}^{-1}\\
&\hskip10pt -\zeta_{\{j\}}\sum_{I\subseteq [k-1]\cup\{k\}}\gamma_I\zeta_I\prod_{t\in I\setminus\{k-1\}}{\ell_t}^{-1}\\
&\hskip10pt+\left(\sum_{I\subseteq [k-1]\cup\{k\}}\gamma_I\zeta_I\prod_{t\in I\setminus\{k-1\}}{\ell_t}^{-1}\right)\\
&\hskip10pt\times\left(\sum_{I\subseteq [k-1]\cup\{j\}}{\gamma_I}'\zeta_I\prod_{t\in I\setminus\{k-1\}}{\ell_t}^{-1}\right),
\end{align*}
where $(-1)^{|I|}\gamma_I$ denotes the number of paths from $v_j$ to $v_k$ on vertices indexed by $I\subseteq [k]$ and $(-1)^{|I|}{\gamma_I}'$ denotes the number of paths from $v_k$ to $v_j$ on vertices indexed by $I\subseteq [k-1]\cup \{j\}$.  Now $\zeta_{\{j,k\}}$ represents a 2-cycle on vertices $\{v_j, v_k\}$.  The quantity \begin{equation*}
\zeta_{\{k\}}\sum_{I\subseteq [k-1]\cup\{j\}}{\gamma_I}'\zeta_I\prod_{t\in I\setminus\{k-1\}}{\ell_t}^{-1}=\sum_{I\subseteq [k]\cup\{j\}}{\gamma_I}'\zeta_I\prod_{t\in I\setminus\{k-1\}}{\ell_t}^{-1}
\end{equation*}
represents cycles based at $v_j$ whose first step is to $v_j\to v_k$.  Similarly, 
\begin{equation*}
\zeta_{\{j\}}\sum_{I\subseteq [k-1]\cup\{k\}}\gamma_I\zeta_I\prod_{t\in I\setminus\{k-1\}}{\ell_t}^{-1}
\end{equation*}
represents cycles based at $v_j$ whose last step is $v_k\to v_j$.
Finally, fixing $I\subseteq[k]$ and $J\subseteq[k-1]\cup\{j\}$,  the nonzero terms of the product of last terms are of the form
\begin{align}
\left[\gamma_I\zeta_I\prod_{t\in I\setminus\{k-1\}}{\ell_t}^{-1}\right]
\left[{\gamma_J}'\zeta_J\prod_{s\in J\setminus\{k-1\}}{\ell_s}^{-1}\right]\nonumber\\
=\gamma_I{\gamma_J}'\zeta_{I\cup J}\prod_{r\in (I\cup J)\setminus\{k-1\}}{\ell_r}^{-1},\end{align}
provided $I\cap J=\varnothing$.  This represents all remaining cycles $v_j\to v_k\to v_j$ on vertices indexed by $I\cup J$.  Now $(-1)^{|I\cup J|}\gamma_I{\gamma_J}'$ denotes the number of cycles based at $v_j$ on vertices indexed by $I\cup J$ including $v_k$ at an intermediate step.  
\end{description}

\begin{description}
\item [{\bf Case 2. ($j\le k$)}]
Recalling that $(\tau_\ell)_{ij}=(\tau_k)_{ij}$ for $1\le \ell\le k$, the only values ${\ell_t}^{-1}$ appearing in terms of $(\tau_k)_{jj}$ are indexed by $t\in [j-1]$.  Otherwise, the proof proceeds as the proof of Case 1.
\end{description}
This concludes the proof of Claim 2.  Letting $k=m-1$ and $j=m$ in \eqref{diag elts}, we now see that  \begin{equation}\label{last diagonal}
(\tau_{m-1})_{mm}=\ell_m-\sum_{I\subseteq [m]}\omega_I\zeta_I\prod_{j\in I\setminus\{m\}}{\ell_j}^{-1},
\end{equation}
where $(-1)^{|I|}\omega_I$ is the number of cycles based at $v_m$ on vertices indexed by $I\subseteq [m]$.  In particular, $m\in I$ for each nonzero term of \eqref{last diagonal}.

Referring to \eqref{triangular form}, each $\eta_j$ is a $\CZ$-linear combination of elements annihilated by $\zeta_{\{m\}}$.  Further, $\eta_m=0$ implies $\zdu\lambda$ is a $\CZ$-linear combination of $\eta_j$s, so $\zeta_{\{m\}}$ annihilates $\zdu\lambda$.  Consequently,  $(\zdu \lambda)^2=0$, and $\eta_j\zdu\lambda=0$ for $j=1, \ldots, m-1$.  Moreover,  it follows that \begin{equation*}
{\ell_j}^{-1}=\frac{1}{d-d_j}-\frac{1}{(d-d_j)^2}\zdu\lambda,
\end{equation*}
and that $\zeta_I\zdu\lambda =0$ when $m\in I$. 

The proof of the theorem is now completed by recalling that $\ell_m=\zdu \lambda$ and $(\tau_{m-1})_{mm}=0$.   In particular, 
\begin{align*}
\zdu\lambda&=\sum_{I\subseteq [m]}\omega_I\zeta_I\prod_{j\in I\setminus\{m\}}{\ell_j}^{-1}\\
&=\sum_{I\subseteq [m]}\omega_I\zeta_{I}\prod_{j\in I\setminus\{m\}}\left(\frac{1}{d-d_j}-\frac{1}{(d-d_j)^2}\zdu\lambda\right)\\
&=\sum_{I\subseteq [m]}\omega_I\zeta_{I}\prod_{j\in I\setminus\{m\}}\frac{1}{d-d_j}\\
&=\lambda-d,
\end{align*}
where $(-1)^{|I|}\omega_I$ is the number of cycles based at $v_m$ on vertices indexed by $I\subseteq [m]$.  

\end{proof}

\begin{example}
\begin{figure}
\fbox{\includegraphics[width=120pt]{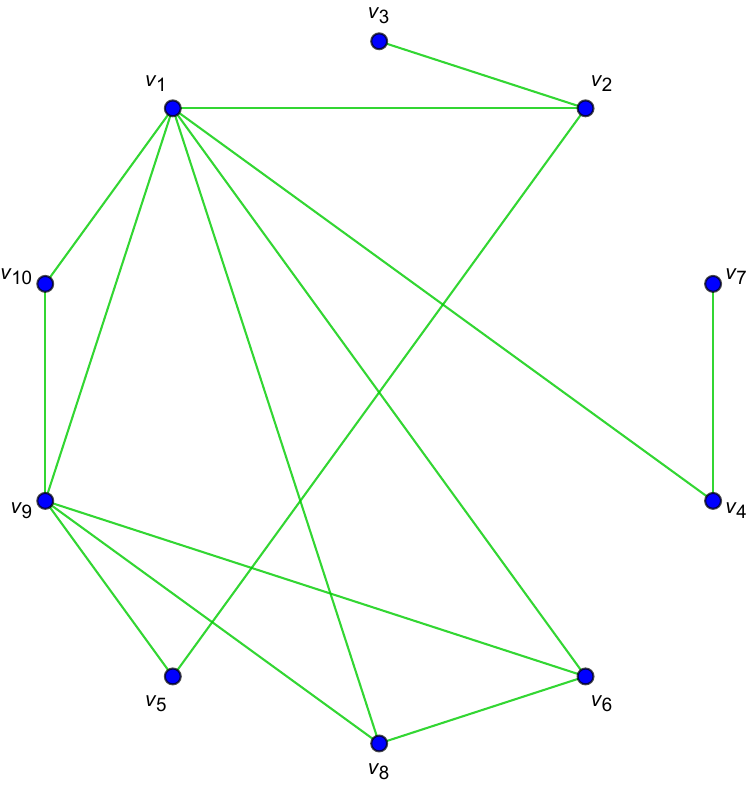}\hskip5pt\includegraphics[width=200pt]{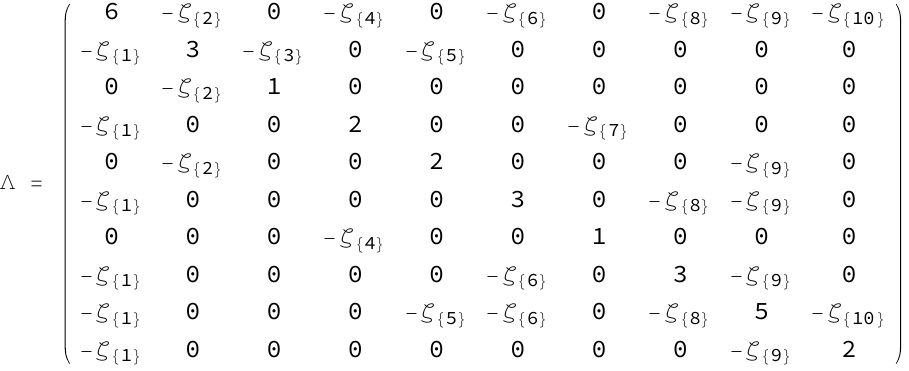}}
\caption{A 10-vertex graph and its zeon Laplacian.\label{10vgraph}}
\end{figure}

The graph of Figure \ref{10vgraph} has a unique vertex, $v_1$, of degree 6 and a unique vertex, $v_9$, of degree 5.   The eigenvalues of these vertices, denoted by $\lambda_6$ and $\lambda_5$, respectively, are as follows:

\begin{align*}
\lambda_6&=6+\frac{1}{3} \zeta _{\{1,2\}}+\frac{1}{4} \zeta _{\{1,4\}}+\frac{1}{3} \zeta _{\{1,6\}}+\frac{1}{3} \zeta _{\{1,8\}}+\zeta _{\{1,9\}}+\frac{1}{4} \zeta _{\{1,10\}}
-\frac{2}{9} \zeta _{\{1,6,8\}}\\
&-\frac{2}{3} \zeta _{\{1,6,9\}}-\frac{2}{3} \zeta _{\{1,8,9\}}-\frac{1}{2} \zeta _{\{1,9,10\}}+\frac{1}{6} \zeta _{\{1,2,5,9\}}+\frac{2}{3} \zeta _{\{1,6,8,9\}}+\frac{1}{6} \zeta _{\{1,6,9,10\}}\\
&+\frac{1}{6} \zeta _{\{1,8,9,10\}}-\frac{1}{18} \zeta _{\{1,2,5,6,9\}}-\frac{1}{18} \zeta _{\{1,2,5,8,9\}}-\frac{1}{24} \zeta _{\{1,2,5,9,10\}}\\
&-\frac{1}{9} \zeta _{\{1,6,8,9,10\}}+\frac{1}{27} \zeta _{\{1,2,5,6,8,9\}},
\end{align*}
and 
\begin{align*}
\lambda_5&=5-\zeta _{\{1,9\}}+\frac{1}{3} \zeta _{\{5,9\}}+\frac{1}{2} \zeta _{\{6,9\}}+\frac{1}{2} \zeta _{\{8,9\}}+\frac{1}{3} \zeta _{\{9,10\}}+\zeta _{\{1,6,9\}}+\zeta _{\{1,8,9\}}\\
&+\frac{2}{3} \zeta _{\{1,9,10\}}-\frac{1}{2} \zeta _{\{6,8,9\}}-\frac{1}{3} \zeta _{\{1,2,5,9\}}-\frac{3}{2} \zeta _{\{1,6,8,9\}}-\frac{1}{3} \zeta _{\{1,6,9,10\}}\\
&-\frac{1}{3} \zeta _{\{1,8,9,10\}}+\frac{1}{6} \zeta _{\{1,2,5,6,9\}}+\frac{1}{6} \zeta _{\{1,2,5,8,9\}}+\frac{1}{9} \zeta _{\{1,2,5,9,10\}}+\frac{1}{3} \zeta _{\{1,6,8,9,10\}}\\
&-\frac{1}{6} \zeta _{\{1,2,5,6,8,9\}}.
\end{align*}

\end{example}

\subsubsection{Eigenvalues of $\Lambda=\Delta-\Psi$ via Exponentials of $\Psi$}

Having characterized eigenvalues of the zeon combinatorial Laplacian in terms of the associated graph's cycles, an alternative formulation is available.  

\begin{corollary}
Let $G=(V,E)$ be a simple finite graph and suppose that for some positive integer $k$, the graph has a unique vertex $v$ of degree $k$.  Let $\Lambda=\Delta-\Psi$ be the zeon combinatorial Laplacian of $G$, where $\Psi$ is the nilpotent adjacency matrix of $G$,  and let $\lambda$ be the eigenvalue of $\Lambda$ satisfying $\zco \lambda =k$.  Then,
\begin{equation*}
\lambda = k + \sum_{I\subseteq V}\omega_I\zeta_{I}\prod_{v_j\in I\setminus\{v\}}(k-\deg(v_j))^{-1},
\end{equation*}
where \begin{equation*}
(-1)^{|I|}\omega_I=|I|!\left< \langle v\vert \exp(\Psi)\vert v\rangle,\zeta_I\right>.
\end{equation*}
\end{corollary}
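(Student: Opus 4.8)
The plan is to observe that the displayed formula for $\lambda$ is already furnished verbatim by Theorem \ref{laplacian eigenvalue} (with $d$ renamed $k$ and the index set written multiplicatively over vertices), so the entire content of the corollary reduces to the alternative expression $(-1)^{|I|}\omega_I = |I|!\langle\langle v\vert\exp(\Psi)\vert v\rangle,\zeta_I\rangle$ for the cycle counts. In the notation of Theorem \ref{laplacian eigenvalue}, $(-1)^{|I|}\omega_I$ is precisely the nonnegative number of cycles based at $v$ on the vertices indexed by $I$, so it suffices to show that the $v$-$v$ diagonal entry of $\exp(\Psi)$ serves as an exponential generating function for these counts.

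First I would record that $\Psi$ is a nilpotent zeon matrix: every entry $\psi_{ij}$ is either $0$ or $\zeta_{\{j\}}$, hence $\Psi\in\Mat(m,\CZ^\circ)$ and $\zco\Psi$ is the zero matrix. By the listed properties of nilpotent zeon matrices, the power series $\exp(\Psi)=\sum_{k\ge 0}\frac{1}{k!}\Psi^k$ collapses to a finite sum, so that $\langle v\vert\exp(\Psi)\vert v\rangle=\sum_{k\ge 0}\frac{1}{k!}\langle v\vert\Psi^k\vert v\rangle$ is a well-defined element of $\CZ$.

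Next I would invoke the diagonal case of the Nil-Structure Theorem (Theorem \ref{nil-structure theorem}), which gives $\langle v\vert\Psi^k\vert v\rangle=\sum_{\{I:|I|=k\}}c_I\,\zeta_I$, where $c_I$ is the number of $k$-cycles on vertex set $I$ based at $v$. The decisive observation is the grade homogeneity built into this statement: $\langle v\vert\Psi^k\vert v\rangle$ is a sum of blades of grade exactly $k$, so for a fixed finite $I\subseteq V$ the blade $\zeta_I$ can arise only from the single summand with $k=|I|$. Consequently the coefficient of $\zeta_I$ in $\langle v\vert\exp(\Psi)\vert v\rangle$ equals $\frac{1}{|I|!}c_I$, and taking the zeon inner product against $\zeta_I$ yields $\langle\langle v\vert\exp(\Psi)\vert v\rangle,\zeta_I\rangle=\frac{1}{|I|!}c_I$. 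Multiplying by $|I|!$ recovers $c_I$, the number of cycles based at $v$ on $I$, which by Theorem \ref{laplacian eigenvalue} is exactly $(-1)^{|I|}\omega_I$. Substituting this expression for the cycle counts into the formula of Theorem \ref{laplacian eigenvalue} completes the argument.

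Since the substance of the result is inherited from the two theorems already established, there is no serious obstacle here; the only point requiring care is the grade-separation step, namely that the blade $\zeta_I$ is contributed by precisely one power of $\Psi$. This is immediate from the homogeneity asserted in Theorem \ref{nil-structure theorem}, but it should be stated explicitly so that the factor $\frac{1}{|I|!}$—and hence the factorial appearing in the corollary—is correctly accounted for.
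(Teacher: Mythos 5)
Your proposal is correct and follows essentially the same route as the paper: both reduce the corollary to Theorem \ref{laplacian eigenvalue}, use nilpotency of $\Psi$ to truncate $\exp(\Psi)$ to a finite sum, and apply the diagonal case of the Nil-Structure Theorem to identify the coefficient of $\zeta_I$ in $\langle v\vert\exp(\Psi)\vert v\rangle$ as $\tfrac{1}{|I|!}$ times the cycle count. Your explicit grade-separation step (that $\zeta_I$ can only arise from the power $\Psi^{|I|}$) is left implicit in the paper's proof, but it is the same argument.
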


\begin{proof}
Observing that $\Psi$ is nilpotent of index $\kappa(\Psi)$, we can write \hfill\break $
\exp(\Psi)=\displaystyle\sum_{n=0}^{\kappa(\Psi)}\frac{1}{n!}\Psi^n.$
By Theorem \ref{nil-structure theorem} (nil-structure), \begin{equation*}
\left< \langle v\vert \exp(\Psi)\vert v\rangle,\zeta_I\right>=\frac{1}{|I|!}\omega_I\zeta_I,
\end{equation*}
where $\omega_I$ is the number of cycles based at $v$ on vertices indexed by $I$.
\end{proof}

In fact, the expansion of the eigenvalue $\lambda$ associated with vertex $v$ can be expressed in terms of the associated main diagonal element of the matrix exponential $e^\Psi$ of the nilpotent adjacency matrix.

\begin{corollary}\label{lambda from Psi}
Let $\varkappa_v=\prod_{j\in I\setminus\{v\}}(\deg(v)-\deg(v_j))$.  Then,
\begin{equation}
\langle \lambda, \zeta_I\rangle=(-1)^{|I|}|I|!\left<\langle v\vert e^\Psi \vert v\rangle,\zeta_I\right>{\varkappa_v}^{-1}.
\end{equation}
Equivalently,
\begin{equation}
\left<\langle v\vert e^\Psi \vert v\rangle,\zeta_I\right>=\frac{(-1)^{|I|}\langle \lambda, \zeta_I\rangle}{|I|!}\varkappa_v.
\end{equation} 
\end{corollary}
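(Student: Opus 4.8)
The plan is to obtain the identity by extracting the coefficient of the basis blade $\zeta_I$ from the closed-form eigenvalue expansion already established, and then substituting the cycle-counting interpretation supplied by the preceding corollary. First I would start from the expansion
\begin{equation*}
\lambda = \deg(v) + \sum_{I\subseteq V}\omega_I\,\zeta_{I}\prod_{j\in I\setminus\{v\}}(\deg(v)-\deg(v_j))^{-1}
\end{equation*}
of Theorem~\ref{laplacian eigenvalue}, in which $(-1)^{|I|}\omega_I$ counts the cycles based at $v$ on the vertex set indexed by $I$. Since every such cycle passes through its base vertex $v$, only terms with $v\in I$ are nonzero, so the scalar product over $j\in I\setminus\{v\}$ is exactly $\varkappa_v^{-1}$. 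Applying the pairing $\langle\,\cdot\,,\zeta_I\rangle$ and noting that the complex scalar factor $\prod_{j\in I\setminus\{v\}}(\deg(v)-\deg(v_j))^{-1}$ passes through the sesquilinear form untouched, I obtain
\begin{equation*}
\langle \lambda,\zeta_I\rangle = \omega_I\,\varkappa_v^{-1}.
\end{equation*}

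Next I would bring in the matrix-exponential description from the previous corollary, namely $(-1)^{|I|}\omega_I = |I|!\,\langle\langle v\vert \exp(\Psi)\vert v\rangle,\zeta_I\rangle$, which rests on the Nil-Structure Theorem: the diagonal entry $\langle v\vert\Psi^n\vert v\rangle$ collects the $n$-cycles based at $v$, and since a cycle on the vertex set $I$ has length exactly $|I|$, only the $n=|I|$ term of $\exp(\Psi)=\sum_n \tfrac{1}{n!}\Psi^n$ contributes to the coefficient of $\zeta_I$, producing the factor $1/|I|!$. Solving for $\omega_I$ gives $\omega_I = (-1)^{|I|}|I|!\,\langle\langle v\vert e^\Psi\vert v\rangle,\zeta_I\rangle$, and substituting into the displayed coefficient formula yields the first asserted identity
\begin{equation*}
\langle \lambda,\zeta_I\rangle = (-1)^{|I|}\,|I|!\,\langle\langle v\vert e^\Psi\vert v\rangle,\zeta_I\rangle\,\varkappa_v^{-1}.
\end{equation*}

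Finally, the equivalent form is pure algebra: multiplying both sides by $\varkappa_v$ and by $(-1)^{|I|}$, and using $(-1)^{2|I|}=1$ together with $\varkappa_v\varkappa_v^{-1}=1$—valid because $\varkappa_v$ is a nonzero complex scalar, the unique degree $\deg(v)$ differing from every other $\deg(v_j)$—solves for $\langle\langle v\vert e^\Psi\vert v\rangle,\zeta_I\rangle$. I expect no genuine obstacle here, since the statement is essentially a coefficientwise rearrangement of the preceding corollary; the only point demanding care is the bookkeeping of the two competing sign conventions, the raw positive cycle count in the Nil-Structure Theorem versus the signed coefficient $\omega_I$ of Theorem~\ref{laplacian eigenvalue}, together with the observation that cycle length coincides with the cardinality $|I|$, which is precisely what aligns the $1/|I|!$ from the exponential with the factor $|I|!$ in the statement.
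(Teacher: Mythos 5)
Your proposal is correct and follows exactly the route the paper intends: Corollary \ref{lambda from Psi} is presented there as an immediate coefficientwise consequence of Theorem \ref{laplacian eigenvalue} combined with the preceding corollary (whose proof uses the Nil-Structure Theorem and the fact that a cycle on vertex set $I$ has length $|I|$, so only the $n=|I|$ term of $e^\Psi$ contributes). Your sign bookkeeping, the observation that nonzero terms require $v\in I$, and the invertibility of $\varkappa_v$ via the unique-degree hypothesis all match the paper's implicit argument.
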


\begin{figure}
$$\fbox{\includegraphics[width=160pt]{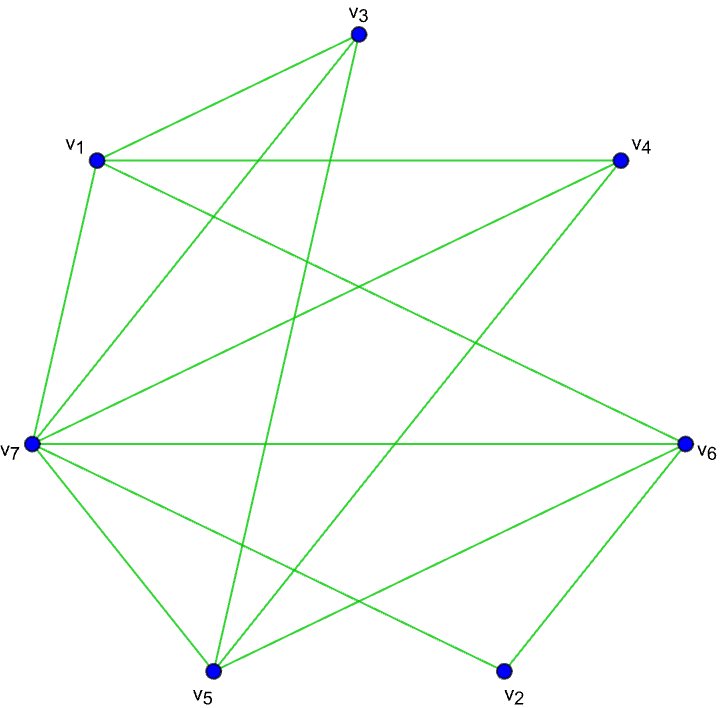}\,\,\,\includegraphics[width=150pt]{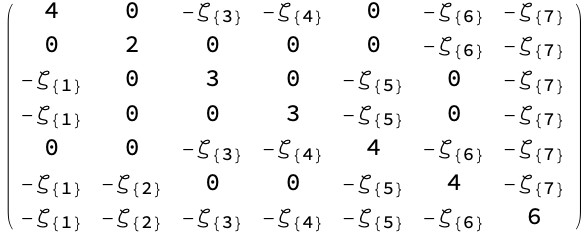}}$$
\caption{A seven vertex simple graph and its zeon Laplacian. \label{lambdaPsigraph}}
\end{figure}

Example \ref{lambda vs e^psi} illustrates the result of Corollary \ref{lambda from Psi}.
 
\begin{example}\label{lambda vs e^psi}
The zeon eigenvalue associated with vertex $v_7$ of the graph in Figure \ref{lambdaPsigraph} is 
\begin{align*}
\lambda&=6+\frac{1}{2} \zeta _{\{1,7\}}+\frac{1}{4} \zeta _{\{2,7\}}+\frac{1}{3} \zeta _{\{3,7\}}+\frac{1}{3} \zeta _{\{4,7\}}+\frac{1}{2} \zeta _{\{5,7\}}+\frac{1}{2} \zeta _{\{6,7\}}\\
&-\frac{1}{3} \zeta _{\{1,3,7\}}-\frac{1}{3} \zeta _{\{1,4,7\}}-\frac{1}{2} \zeta _{\{1,6,7\}}-\frac{1}{4} \zeta _{\{2,6,7\}}-\frac{1}{3} \zeta _{\{3,5,7\}}-\frac{1}{3} \zeta _{\{4,5,7\}}\\
&-\frac{1}{2} \zeta _{\{5,6,7\}}+\frac{1}{8} \zeta _{\{1,2,6,7\}}+\frac{1}{9} \zeta _{\{1,3,4,7\}}+\frac{1}{6} \zeta _{\{1,3,5,7\}}+\frac{1}{6} \zeta _{\{1,3,6,7\}}+\frac{1}{6} \zeta _{\{1,4,5,7\}}\\
&+\frac{1}{6} \zeta _{\{1,4,6,7\}}+\frac{1}{4} \zeta _{\{1,5,6,7\}}+\frac{1}{8} \zeta _{\{2,5,6,7\}}+\frac{1}{9} \zeta _{\{3,4,5,7\}}+\frac{1}{6} \zeta _{\{3,5,6,7\}}+\frac{1}{6} \zeta _{\{4,5,6,7\}}\\
&-\frac{1}{24} \zeta _{\{1,2,3,6,7\}}-\frac{1}{24} \zeta _{\{1,2,4,6,7\}}-\frac{2}{9} \zeta _{\{1,3,4,5,7\}}-\frac{1}{3} \zeta _{\{1,3,5,6,7\}}-\frac{1}{3} \zeta _{\{1,4,5,6,7\}}\\
&-\frac{1}{24} \zeta _{\{2,3,5,6,7\}}-\frac{1}{24} \zeta _{\{2,4,5,6,7\}}+\frac{1}{24} \zeta _{\{1,2,3,5,6,7\}}+\frac{1}{24} \zeta _{\{1,2,4,5,6,7\}}\\
&+\frac{1}{6} \zeta _{\{1,3,4,5,6,7\}}-\frac{1}{36} \zeta _{\{1,2,3,4,5,6,7\}}.
\end{align*}
The last main diagonal element of $\exp(\Psi)$ is 
\begin{align*}
\langle 7\vert \exp(\Psi)\vert 7\rangle&=1+\frac{1}{2} \zeta _{\{1,7\}}+\frac{1}{2} \zeta _{\{2,7\}}+\frac{1}{2} \zeta _{\{3,7\}}+\frac{1}{2} \zeta _{\{4,7\}}+\frac{1}{2} \zeta _{\{5,7\}}\\
&+\frac{1}{2} \zeta _{\{6,7\}}+\frac{1}{3} \zeta _{\{1,3,7\}}+\frac{1}{3} \zeta _{\{1,4,7\}}+\frac{1}{3} \zeta _{\{1,6,7\}}+\frac{1}{3} \zeta _{\{2,6,7\}}\\
&+\frac{1}{3} \zeta _{\{3,5,7\}}+\frac{1}{3} \zeta _{\{4,5,7\}}+\frac{1}{3} \zeta _{\{5,6,7\}}+\frac{1}{12} \zeta _{\{1,2,6,7\}}+\frac{1}{12} \zeta _{\{1,3,4,7\}}\\
&+\frac{1}{12} \zeta _{\{1,3,5,7\}}+\frac{1}{12} \zeta _{\{1,3,6,7\}}+\frac{1}{12} \zeta _{\{1,4,5,7\}}+\frac{1}{12} \zeta _{\{1,4,6,7\}}\\
&+\frac{1}{12} \zeta _{\{1,5,6,7\}}+\frac{1}{12} \zeta _{\{2,5,6,7\}}+\frac{1}{12} \zeta _{\{3,4,5,7\}}+\frac{1}{12} \zeta _{\{3,5,6,7\}}\\
&+\frac{1}{12} \zeta _{\{4,5,6,7\}}+\frac{1}{60} \zeta _{\{1,2,3,6,7\}}+\frac{1}{60} \zeta _{\{1,2,4,6,7\}}+\frac{1}{15} \zeta _{\{1,3,4,5,7\}}\\
&+\frac{1}{15} \zeta _{\{1,3,5,6,7\}}+\frac{1}{15} \zeta _{\{1,4,5,6,7\}}+\frac{1}{60} \zeta _{\{2,3,5,6,7\}}+\frac{1}{60} \zeta _{\{2,4,5,6,7\}}\\
&+\frac{1}{180} \zeta _{\{1,2,3,5,6,7\}}+\frac{1}{180} \zeta _{\{1,2,4,5,6,7\}}+\frac{1}{60} \zeta _{\{1,3,4,5,6,7\}}\\
&+\frac{1}{630} \zeta _{\{1,2,3,4,5,6,7\}}.
\end{align*}

Note that letting $I=\{1,2,3,4,5,6,7\}$, we have $\langle\lambda, \zeta_I\rangle=-\frac{1}{36}$ and $\langle\langle 7\vert \exp(\Psi)\vert 7\rangle,\zeta_{I}\rangle=\frac{1}{630}$.  Further, in accordance with Corollary \ref{lambda from Psi},  \begin{align*}
\langle \lambda, \zeta_{I}\rangle&=-\frac{1}{36}\\
&=(-1)^7 7!\frac{1}{630}\frac{1}{2^3\cdot3^2\cdot4}\\
&=(-1)^{|I|}|I|!\langle\langle 7\vert \exp(\Psi)\vert 7\rangle, \zeta_{I}\rangle\prod_{j\in I\setminus \{7\}}\frac{1}{(6-\deg(v_j))}.
\end{align*}

\end{example}

It is now apparent that any eigenvalue of $\Lambda$ associated with a vertex of unique degree in a simple graph can be computed directly from the nilpotent adjacency matrix with no need to compute the characteristic polynomial. 

\subsection{Laplacian Eigenvectors}

Having characterized the zeon eigenvalue of $\Lambda$ corresponding to a vertex of unique degree in a graph $G$, we now turn to the task of describing the associated zeon eigenvector. 

\begin{theorem}\label{Laplacian eigenvector}
Let $G=(V,E)$ be a simple finite graph on $m$ vertices $V=\{v_1, \ldots, v_m\}$ and suppose that for some integer $d$, the graph has a unique vertex $v_r$ of degree $d$.  Let $\Lambda$ be the zeon combinatorial Laplacian of $G$, and let $\lambda$ be the unique eigenvalue of $\Lambda$ satisfying $\zco \lambda=d$.  Then
$\Lambda$ has an eigenvector of the form $x_\lambda=(\mu_1, \ldots,  \mu_{m})^\intercal$ where $\mu_r=1$ and $\mu_i\in{\CZ_m}^\circ$ for $i\ne r$.  In particular, for each $\ell\ne r$, \begin{equation}\label{eigenvector characterization 1}
\langle\mu_\ell, \zeta_I\rangle=\gamma_I(d-d_\ell)^{-1}\prod_{j\in I}(d-d_j)^{-1},
\end{equation}
where $(-1)^{|I|}\gamma_I$ denotes the number of paths and PWICs $v_\ell\to v_r$ on vertices indexed by $I$.   Moreover, \begin{equation*}
\lambda=d+\sum_{\{j: \{v_j,v_r\}\in E\}}\zeta_{\{j\}}\mu_j.
\end{equation*}
\end{theorem}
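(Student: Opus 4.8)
The plan is to build the eigenvector directly from the eigen-equation $\Lambda x_\lambda=\lambda x_\lambda$ and then extract the combinatorial content of its entries via the Nil-Structure Theorem. After relabeling so that $v_r=v_m$ is the unique vertex of degree $d$ (matching the setup in the proof of Theorem \ref{laplacian eigenvalue}, so that its conclusions about $\zdu\lambda$ remain available), existence of a non-null eigenvector is already guaranteed, since $\lambda$ is a spectrally simple eigenvalue of $\Lambda$. Writing $x_\lambda=(\mu_1,\ldots,\mu_m)^\intercal$ and $\ell_i=\lambda-d_i$, the $i$th row of $\Lambda x_\lambda=\lambda x_\lambda$ reads $(d_i-\lambda)\mu_i=\sum_{v_j\sim v_i}\zeta_{\{j\}}\mu_j$. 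For $i\ne r$ the scalar part $d-d_i$ of $\ell_i$ is nonzero by the unique-degree hypothesis, so $\ell_i$ is invertible and
\[
\mu_i=-\ell_i^{-1}\sum_{v_j\sim v_i}\zeta_{\{j\}}\mu_j\qquad(i\ne r).
\]
First I would normalize by setting $\mu_r=1$; this is legitimate because the recursion forces $\mu_i\in\CZ^\circ$ for every $i\ne r$ (each term carries a factor $\zeta_{\{j\}}$), so the $r$th component is necessarily the invertible entry that makes $x_\lambda$ non-null.

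Next I would solve this coupled system with $\mu_r=1$. Viewing the off-diagonal couplings as an operator over $\CZ^\circ$ makes it nilpotent, so the solution is obtained as a terminating Neumann-type iteration (equivalently, all computations live in a finite-dimensional $\CZ_m$, so the grade of successive contributions strictly increases and the process halts). Unrolling the recursion expresses each $\mu_\ell$ as a sum over walks $v_\ell\to v_r$,
\[
\mu_\ell=\sum_{W:\,v_\ell\to v_r}\Bigl(\prod_{t=0}^{s-1}(-\ell_{a_t}^{-1})\Bigr)\zeta_{\{a_1,\ldots,a_s\}},
\]
where $W=(v_\ell=v_{a_0},v_{a_1},\ldots,v_{a_s}=v_r)$.

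The combinatorial heart of the argument is identifying which walks survive. Since the initial vertex $v_\ell$ never enters the accumulating blade at step $0$, a walk is annihilated precisely when it repeats any vertex other than possibly $v_\ell$, which may recur exactly once; hence the surviving walks are exactly the paths and PWICs from $v_\ell$ to $v_r$, in agreement with the analysis underlying Theorem \ref{nil-structure theorem}. The key algebraic simplification is that every surviving blade $\zeta_I$ contains the index $r$, while by Theorem \ref{laplacian eigenvalue} the dual part $\zdu\lambda$ lies in $\spn\{\zeta_J:r\in J\}$ (a cycle based at $v_r$ contains $v_r$); thus $\zeta_I\,\zdu\lambda=0$, and each inverse $\ell_{a_t}^{-1}$ may be replaced by its scalar part $(d-d_{a_t})^{-1}$. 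Collecting terms by blade and noting that a PWIC records the extra visit of $v_\ell$ (which accounts for the repeated factor $(d-d_\ell)^{-1}$) then yields \eqref{eigenvector characterization 1}, with $(-1)^{|I|}\gamma_I$ counting the paths and PWICs $v_\ell\to v_r$.

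Finally, the $r$th row of the eigen-equation, which was not used in constructing the $\mu_i$, produces the stated relation between $\lambda$ and the entries $\zeta_{\{j\}}\mu_j$ with $v_j\sim v_r$. I would close by checking consistency with Theorem \ref{laplacian eigenvalue}: a term $\zeta_{\{j\}}\mu_j$ prepends the edge $v_r\to v_j$ to a path or PWIC $v_j\to v_r$, so $\sum_{v_j\sim v_r}\zeta_{\{j\}}\mu_j$ enumerates exactly the cycles based at $v_r$, reproducing $\zdu\lambda$. The main obstacle I anticipate is the bookkeeping in the previous paragraph: rigorously justifying that precisely the paths and PWICs survive, tracking the repeated initial-vertex factor in the PWIC case, and pinning down the index-set convention for $I$ (in particular whether the terminal index $r$ is carried by $\zeta_I$ versus the scalar product $\prod_{j\in I}(d-d_j)^{-1}$) so that the weight simplification lands on \eqref{eigenvector characterization 1} exactly.
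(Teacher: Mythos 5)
Your proposal is correct, but it takes a genuinely different route from the paper's proof. The paper constructs the eigenvector by Gaussian elimination: it resumes the upper-triangular form $\tau(\lambda\mI-\Lambda)$ built during the proof of Theorem \ref{laplacian eigenvalue}, back-substitutes to reduced row echelon form, and proves by induction on the elimination steps that the entries of the last column are exactly the path/PWIC enumerators $-\mu_\ell$; the componentwise relations from $\Lambda\mu=\lambda\mu$ are only read off at the very end. You instead bypass the elimination entirely: rows $i\ne r$ of the eigen-equation give the recursion $\mu_i=-\ell_i^{-1}\sum_{v_j\sim v_i}\zeta_{\{j\}}\mu_j$, whose linear part is nilpotent because each application strictly raises the minimal grade, so with the normalization $\mu_r=1$ (legitimately forced, since non-nullity requires an invertible entry and the recursion makes every other entry nilpotent) a terminating Neumann iteration produces the unique solution as a sum over walks $v_\ell\to v_r$; null-squareness selects exactly the paths and PWICs, and the fact from Theorem \ref{laplacian eigenvalue} that every blade of $\zdu\lambda$ contains the index $r$ reduces each zeon inverse $\ell_{a_t}^{-1}$ to its scalar part. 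What your route buys is self-containment and transparency: you use only the \emph{statement} of Theorem \ref{laplacian eigenvalue} (existence of $\lambda$, the support and formula of $\zdu\lambda$), not its internal elimination machinery, and the eigenvector entries appear directly as walk generating functions. What the paper's route buys is that the survival/weight bookkeeping comes for free from its Claim 1, at the price of substantially heavier matrix manipulation.

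Two points you flagged as obstacles do resolve, and are worth recording. First, the weights close up uniformly: for a PWIC the repeated initial index $\ell$ already lies in $I$, so $\prod_{j\in I\setminus\{r\}}(d-d_j)^{-1}$ supplies one factor $(d-d_\ell)^{-1}$ and the prefactor in \eqref{eigenvector characterization 1} the second; this also settles your index-convention worry — the product in \eqref{eigenvector characterization 1} must be read over $I\setminus\{r\}$ (as in Theorem \ref{laplacian eigenvalue}), since $d-d_r=0$. Second, a sign caution: with $\Lambda=\Delta-\Psi$, your row-$r$ equation gives $\lambda=d-\sum_{\{j:\{v_j,v_r\}\in E\}}\zeta_{\{j\}}\mu_j$, and the cycle identification indeed yields $\sum_{\{j:\{v_j,v_r\}\in E\}}\zeta_{\{j\}}\mu_j=-\zdu\lambda$, since multiplying by $\zeta_{\{j\}}$ kills the PWICs from $v_j$ and prepending the edge $v_r\to v_j$ converts $(-1)^{|I|}$ into $(-1)^{|I|+1}$. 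This agrees with the paper's five-vertex worked example but differs in sign from the displayed ``Moreover'' formula in the theorem statement; that is an inconsistency internal to the paper (its own proof silently switches the sign of the off-diagonal terms), not a gap in your argument.
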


\begin{proof}
For convenience, relabel the vertices if necessary so that vertex $v_m$ is the unique vertex of degree $d$, let $\Lambda$ be the zeon combinatorial Laplacian of $G$, and let $\lambda$ be the unique eigenvalue of $\Lambda$ satisfying $\zco \lambda=d$.
It follows that the reduced row echelon form of $\lambda\mI-\Lambda$ is of the form \begin{equation}\label{RRF}
\rho(\lambda\mI-\Lambda)=\begin{pmatrix}
1&0&\cdots&0&-\mu_1\\
0&1&\cdots&0&-\mu_2\\
0&0&\ddots&0&\vdots\\
0&0&\cdots&1&-\mu_{m-1}\\
0&0&\cdots&0&0
\end{pmatrix}
\end{equation}
where $\mu_i\in\CZ^\circ$ for $i=1, \ldots, m-1$.  The normalized eigenvector $\mu\in\CZ^m$ associated with $\lambda$ is then of the form  $\mu=(\mu_1, \ldots, \mu_{m-1}, 1)^\intercal$, where $\mu_i\in\CZ^\circ$ for each $i$.  

\begin{description}
\item [{\bf Claim.}]
For each $k=1, \ldots, m-1$, 
\begin{equation}\label{mu form}
\mu_{m-k}=\sum_{\ell=1}^{k}\sum_{I\subseteq[m-1]}\gamma_{\ell,I\cup\{m\}}\zeta_{I\cup\{m\}}\prod_{t\in I\cap[m-1]}{\ell_t}^{-1},
\end{equation}
where $(-1)^{|I\cup\{m\}|}\gamma_{\ell, I\cup\{m\}}$ is the number of paths and PWICs from $v_{\ell}$ to $v_m$ on vertices indexed by $I\cup\{m\}$.  
\end{description}

Recall that $\lambda\mI-\Lambda$ has upper triangular form \begin{equation}
\tau(\lambda\mI-\Lambda)=\left(\begin{matrix}
\alpha_1&*&\cdots&*&\eta_1\\
0&\alpha_2&\cdots&*&\eta_2\\
0&0&\ddots&\vdots&\vdots\\
\vdots&\vdots&0&\alpha_{m-1}&\eta_{m-1}\\
0&0&\cdots&0&0
\end{matrix}\right),  
 \end{equation}
where $\alpha_\ell\in\CZ^\times$ and $\eta_\ell\in\CZ^\circ$ for $\ell=1, \ldots, m$. 

Proceeding as before using elementary row operations, set $\rho_0=\tau(\lambda\mI-\Lambda)$ and let $\rho_k$ be the matrix obtained after $k$ steps of the (lower) triangularization process for $k=1, \ldots, m-1$.  

As basis for induction, we have \begin{equation}
(\rho_1)_{ij}=\begin{cases}
1&i=j=m-1;\\
(\alpha_{m-1})^{-1}\eta_{m-1}& i=m-1, j=m;\\
\eta_i-(\alpha_{m-1})^{-1}(\tau_i)_{im}\eta_{m-1}& i<m-1, j=m;\\
(\tau_i)_{ij}& i<j<m-1; \\
0&\text{\rm otherwise.}
\end{cases}
\end{equation}
In particular, keeping in mind that $\zeta_{\{m\}}$ annihilates each $\eta_i$, 
\begin{align*}
\mu_{m-1}&=-(\alpha_{m-1})^{-1}\eta_{m-1}\\
&=-(d_{m-1}-d)^{-1}\left(\varepsilon_{m-1,m}-\sum_{I\subseteq[m-2]}\gamma_{I\cup\{m\}}\prod_{t\in I\cap[m-2]}{\ell_t}^{-1}\right)\\
&=\sum_{I\subseteq[m-1]}\gamma_{I\cup\{m\}}\zeta_{I\cup\{m\}}\prod_{t\in I\cap[m-1]}{\ell_t}^{-1}
\end{align*}
where $(-1)^{|I\cup\{m\}|}\gamma_{I\cup\{m\}}$ is the number of paths and PWICs from $v_{m-1}$ to $v_m$ on vertices indexed by $I\cup\{m\}$.   

Assume now that for some $k\ge1$, we have \begin{equation}\label{IH}
\mu_{m-k}=\sum_{I\subseteq[m-1]}\gamma_{I\cup\{m\}}\zeta_{I\cup\{m\}}\prod_{t\in I\cap[m-1]}{\ell_t}^{-1}
\end{equation}
where $(-1)^{|I\cup\{m\}|}\gamma_{I\cup\{m\}}$ is the number of paths and PWICs from $v_{m-1}$ to $v_m$ on vertices indexed by $I\cup\{m\}$.  It follows that 
\begin{align*}
\mu_{m-(k+1)}&=-(d_{m-k-1)}-d)^{-1}\\
&\times\left(\varepsilon_{m-k-1,m}-\sum_{\ell=1}^k
\sum_{I\subseteq[m-(\ell+1)]}\gamma_{I\cup\{m\}}\prod_{t\in I\cap[m-(\ell+1)]}{\ell_t}^{-1}\right)\\
&=\sum_{I\subseteq[m-1]}\gamma_{\ell,I\cup\{m\}}\zeta_{I\cup\{m\}}\prod_{t\in I\cap[m-1]}{\ell_t}^{-1},
\end{align*}
where $(-1)^{|I\cup\{m\}|}\gamma_{\ell, I\cup\{m\}}$ is the number of paths and PWICs from $v_{\ell}$ to $v_m$ on vertices indexed by $I\cup\{m\}$.

Proceeding as before using elementary row operations, the following form is obtained:  
\begin{equation}
\rho_{m-1}=\left(\begin{matrix}
\alpha_1&0&0&\cdots&-\alpha_1\mu_1\\
0&\alpha_2&0&\cdots&-\alpha_2\mu_2\\
0&0&\ddots&0&\vdots\\
\vdots&\vdots&0&\alpha_{m-1}&-\alpha_{m-1}\mu_{m-1}\\
0&0&\cdots&0&0
\end{matrix}\right).  
\end{equation}
Multiplying the nonzeros rows by inverses of the diagonal elements gives the final form seen in \eqref{RRF}.  

It now follows that for each $\ell=1, \ldots, m-1$, the $\ell$th component of $\Lambda \mu$ is given by 
\begin{align*}
\langle \ell\vert \Lambda \mu\rangle=d_\ell\mu_\ell+\sum_{\{j: \{v_j,v_\ell\}\in E\}}\zeta_{\{j\}}\mu_j&=\langle \ell\vert\lambda \mu\rangle\\
&=\lambda \mu_\ell=d\mu_\ell.
\end{align*}
Here, we have used the fact that $\zeta_{\{m\}}\mu_\ell=0=\zeta_{\{m\}}\zdu\lambda$ for $\ell=1, \ldots, m-1$ and $d=\zco\lambda$.  Thus,
\begin{equation*}
\mu_\ell=\frac{1}{d-d_\ell}\sum_{\{j: \{v_j,v_\ell\}\in E\}}\zeta_{\{j\}}\mu_j.
\end{equation*}
Moreover, $\lambda=\langle m\vert\Lambda \mu\rangle$ implies \begin{align*}
\lambda=d+\sum_{\{j: \{v_j,v_m\}\in E\}}\zeta_{\{j\}}\mu_j.
\end{align*}
\end{proof}

Observing that for each $\ell=1, \ldots, m-1$, the $\ell$th component of the eigenvector $\mu$ is given by $\mu_\ell=\displaystyle\sum_{I\ne\varnothing}\langle \mu_\ell, \zeta_I\rangle\zeta_I$,  we see that the $\ell$th component of $\mu$ gives an algebraic representation of all paths and PWICs $v_\ell\to v_m$ in the graph $G$.

\section{Generalizations of the Zeon Combinatorial Laplacian}\label{generalizations}

We now turn to generalizations of the zeon combinatorial Laplacian that enumerate all cycles of a finite graph.  All eigenvalues of these generalized zeon Laplacians can be obtained from the exponential of the graph's nilpotent adjacency matrix.  Hence the characteristic polynomial itself can be computed this way.
 
The first approach is to simply replace vertex degrees with a convenient labeling of the graph's vertices.  

Given a finite simple graph $G$ on $m$ vertices $V=\{v_1, v_2, \ldots, v_m\}$, let $\mathbf{f}=(f_1, \ldots, f_m)\in\mathbb{N}^m$ be an ordered $m$-tuple of distinct positive integers.   Let $\Delta_{\mathbf{f}}$ be the $m\times m$ diagonal matrix whose main diagonal is $\mathbf{f}$.  Letting $\Psi$ be the nilpotent adjacency matrix of $G$, the matrix $\Lambda_{\mathbf{f}}=\Delta_{\mathbf{f}}-\Psi\in\Mat(m,\CZ)$ is referred to as the {\em $\mathbf{f}$-labeled zeon combinatorial Laplacian}. 

\begin{corollary}\label{discrete spectrum}
Let $G=(V,E)$ be a simple finite graph on $m$ vertices, and let $\mathbf{f}=(f_1, \ldots, f_m)\in\mathbb{N}^m$ be an ordered $m$-tuple of distinct positive integers.   Let $\Lambda_{\mathbf{f}}$ be the associated ${\mathbf{f}}$-labeled zeon combinatorial Laplacian of $G$.  Then, for each $\ell=1, \ldots, m$, $\Lambda_{\mathbf{f}}$ has a spectrally simple eigenvalue $\lambda_\ell$ satisfying 
\begin{equation*}
\lambda_\ell = f_\ell + \sum_{I\subseteq [m]}\omega_I\zeta_{I}\prod_{j\in I\setminus\{\ell\}}(f_\ell-f_j)^{-1},
\end{equation*}
where $(-1)^{|I|}\omega_I$ is the number of cycles based at $v_\ell$ on vertices indexed by $I$.
\end{corollary}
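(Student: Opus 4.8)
The plan is to deduce this corollary directly from Theorem~\ref{laplacian eigenvalue}, observing that the hypothesis of that theorem—a vertex whose diagonal entry is distinct from every other—is now met \emph{simultaneously} at every vertex, because the degrees have been replaced by the distinct labels $f_1, \ldots, f_m$. The essential point is that nothing in the proof of Theorem~\ref{laplacian eigenvalue} uses that the diagonal entry of the target vertex equals its degree; it uses only that this entry is distinct from all the others (so the differences $\lambda - d_j$ are invertible) together with the fact that the off-diagonal structure is governed entirely by the nilpotent adjacency matrix $\Psi$, which is left unchanged when the diagonal is relabeled.

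First I would establish existence and uniqueness of the spectrally simple eigenvalues. Since $\zco \Lambda_{\mathbf{f}} = \Delta_{\mathbf{f}}$, the induced complex characteristic polynomial factors as
\begin{equation*}
\chi_{\zco \Lambda_{\mathbf{f}}}(t) = |t\mI - \Delta_{\mathbf{f}}| = \prod_{j=1}^m (t - f_j).
\end{equation*}
Because the $f_j$ are distinct positive integers, each $f_\ell$ is a \emph{simple} complex zero of this polynomial. By the first of the listed facts concerning zeon matrices (itself a consequence of the Fundamental Theorem of Zeon Algebra), $\Lambda_{\mathbf{f}}$ therefore possesses, for each $\ell$, a unique spectrally simple zeon eigenvalue $\lambda_\ell$ with $\zco \lambda_\ell = f_\ell$.

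Next I would recover the explicit expansion. Fixing $\ell$ and relabeling the vertices so that $v_\ell$ receives index $m$, write $\ell_j = \lambda_\ell - f_j$. Since the labels are distinct, $\zco \ell_j = f_\ell - f_j \ne 0$ for every $j \ne m$, so $\ell_1, \ldots, \ell_{m-1}$ are invertible while $\ell_m = \zdu \lambda_\ell$ is nilpotent; this is precisely the configuration of~\eqref{form to triangularize}, with $d$ replaced by $f_\ell$ and $\deg(v_j)$ replaced by $f_j$. The triangularization argument then proceeds verbatim, since the off-diagonal entries of $\lambda_\ell \mI - \Lambda_{\mathbf{f}}$ are the same quantities $\varepsilon_{ij}\zeta_{\{j\}}$ read off from $\Psi$; hence Claim~1 and Claim~2 hold unchanged, their path- and cycle-counting interpretations depending only on the adjacency data. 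In particular, the analogue of~\eqref{last diagonal} gives the final diagonal entry as
\begin{equation*}
(\tau_{m-1})_{mm} = \zdu \lambda_\ell - \sum_{I \subseteq [m]} \omega_I \zeta_I \prod_{j \in I \setminus \{m\}} {\ell_j}^{-1} = 0,
\end{equation*}
and solving for $\zdu \lambda_\ell$ exactly as in the theorem—using that $\zeta_{\{m\}}$ annihilates $\zdu \lambda_\ell$ and that $m \in I$ in every surviving term, so the correction terms in each ${\ell_j}^{-1}$ vanish and these reduce to the scalars $(f_\ell - f_j)^{-1}$—yields the claimed formula once the original labeling is restored.

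The only genuine obstacle, and the step I would treat most carefully, is confirming that no step in the proof of Theorem~\ref{laplacian eigenvalue} silently exploits the identity (diagonal entry) $=$ (number of neighbors). Scanning that argument, the degrees enter solely as the diagonal scalars $d_j$ inside $\ell_j = \lambda - d_j$, whose only required property is invertibility for $j \ne m$; all combinatorial content—the reading of triangularized entries as counts of paths, PWICs, and cycles—is driven entirely by $\Psi$. Hence the substitution $\deg(v_j) \mapsto f_j$ is legitimate at every stage, and the corollary reduces to Theorem~\ref{laplacian eigenvalue} applied once per vertex, each vertex now qualifying through label-distinctness rather than degree-uniqueness.
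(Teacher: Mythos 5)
Your proposal is correct and matches the paper's intended justification: the corollary is stated there without separate proof precisely because the argument of Theorem~\ref{laplacian eigenvalue} carries over verbatim once the degrees $d_j$ are replaced by the distinct labels $f_j$, which is exactly the reduction you carry out. Your explicit check that the proof of Theorem~\ref{laplacian eigenvalue} uses the diagonal entries only through invertibility of the differences $\lambda-d_j$ (never through the identity of diagonal entry with vertex degree) is the one point the paper leaves implicit, and you have verified it correctly.
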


Given a graph $G$ and generalized zeon Laplacian $\Lambda_{\mathbf{f}}$ as described in Corollary \ref{discrete spectrum}, each vertex $v$ corresponds to a unique eigenvalue $\lambda_v$ with zeon eigenvector (eigenstate) $\varepsilon_v$.

\subsection{Symmetric Zeon Laplacians and a Quantum Probabilistic Interpretations of Zeon Matrices}

Recalling that self-adjoint linear operators on Hilbert spaces can be interpreted as quantum observables, we seek an analogous interpretation of the zeon combinatorial Laplacian.  

Two vectors $v_1,v_2\in{\CZ}^m$ are said to be {\em orthogonal} if and only if $\langle v_1\vert v_2\rangle=0$.   Note that any collection of zeon vectors $\{v_1, \ldots, v_m\}$ spanning ${\CZ}^m$ can be orthogonalized via Gaussian elimination.  Normalizing then yields an orthonormal basis for ${\CZ}^m$.  

Given a normalized element $v\in{\CZ}^m$, the matrix $vv^\dag$ represents orthogonal projection onto $\spn(\{v\})$.   

A matrix $A\in\Mat(m,\CZ)$ satisfying $A^\dag=A$ is clearly self-adjoint with respect to the inner product \eqref{inner prod def}.  Eigenvalues of self-adjoint zeon matrices are elements of the real zeon algebra $\Z_n$; i.e., $\chi_A(\lambda)=0$ implies $\lambda=\overline{\lambda}$ when $A$ is self-adjoint.   Further, eigenvectors of self-adjoint zeon matrices associated with distinct eigenvalues are orthogonal.  Given distinct eigenvalues $\lambda_1$ and $\lambda_2$ associated with eigenvectors $v_1$ and $v_2$, respectively, 
\begin{align*}
\lambda_1\langle v_1\vert v_2\rangle&=\langle Av_1\vert v_2\rangle\\
&=\langle v_1\vert A v_2\rangle=\lambda_2 \langle v_1\vert v_2\rangle,
\end{align*}
implying $\langle v_1\vert v_2\rangle=0$ since $\zco \lambda_1\ne \zco\lambda_2$.

\begin{description}
\item [{\bf Zeon Spectral Theorem.}]
Let $A\in\Mat(m,\CZ)$ be a self-adjoint zeon matrix with $m$ spectrally simple eigenvalues.  Let $v_1, \ldots, v_m$ denote normalized zeon eigenvectors associated with these eigenvalues and set $\pi_j=v_j{v_j}^\dag$ for $j=1, \ldots, m$.  Then, \begin{equation*}
A=\bigoplus_{j=1}^m \lambda_j\pi_j.
\end{equation*}
\end{description}

Given finite simple graph $G$ on $m$ vertices, let $\Delta_*$ denote any $m\times m$ invertible diagonal matrix with distinct diagonal entries $d_1, \ldots, d_m\in\mathbb{Q}^+$, and let $\Psi_*=\Psi+\Psi^\dag$, where $\Psi$ is the nilpotent adjacency matrix of $G$.  Then $\Delta_*-\Psi_*$ is symmetric and diagonalizable and has spectral decomposition $\Delta_*-\Psi_*=\displaystyle\bigoplus_{j=1}^n\lambda_j\pi_j$ for eigenvalues $\lambda_j$ and associated eigenspace projections $\pi_j$.

Letting $\Lambda_*=\Delta_*-\Psi_*$, it follows immediately that $\Lambda_*$ is symmetric and therefore self-adjoint with respect to the inner product on ${\CZ}^m$ defined by 
\begin{equation*}
\langle x\vert y\rangle =y^\dag x.
\end{equation*}

Note that $\Psi\Psi^\dag=\zm$.  As an immediate consequence, \begin{equation*}
(\Psi+\Psi^\dag)^k=\sum_{j=0}^k {\Psi^\dag}^j\Psi^{k-j}
\end{equation*}
for each positive integer $k$.  Moreover, $\Psi^\ell=\zm\Leftrightarrow {\Psi^\dag}^\ell=\zm$, so that $\kappa(\Psi)=\kappa(\Psi^\dag)$.

Observing that $\langle v_i\vert \Psi\vert v_j\rangle=\langle v_j\vert \Psi^\dag\vert v_i\rangle$, the next result follows immediately from Theorem \ref{nil-structure theorem}. 

\begin{lemma} [Nil-Structure Theorem for $\Psi^\dag$]
Let $\Psi$ be the nilpotent adjacency matrix of an
$m$-vertex graph $G$, and let $k\in\mathbb{N}$.  For $1\le i\ne j \le m$, 
\begin{equation*}
\left<v_i\vert{\Psi^\dag}^k\vert \zeta_{\{j\}}\right>=
\sum_{\genfrac{}{}{0pt}{}{I\subseteq V}{|I|=k+1}}\omega_I \zeta_{I},\end{equation*}
where $\omega_I$ denotes the number of $k$-paths from $v_j$ to $v_i$ on vertices indexed by $I$.  Further,  when $1\le i \le m$,
\begin{equation*}
\left<v_i\vert{\Psi^\dag}^k\vert v_i\right>=\sum_{\genfrac{}{}{0pt}{}{I\subseteq V}{|I|=k}}\omega_I \zeta_{I},\end{equation*}
where $\omega_I$ denotes the number of $k$-cycles on vertex set $I$ based at $v_i\in I$.  
\end{lemma}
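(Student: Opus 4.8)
The plan is to reduce the statement directly to the original Nil-Structure Theorem (Theorem \ref{nil-structure theorem}) by combining the fact that $\Psi$ has real zeon entries with the commutativity of $\CZ$. The crucial observation is that complex conjugation acts trivially on the entries $\psi_{ij}\in\{0,\zeta_{\{j\}}\}$, so that $\Psi^\dag=\overline{\Psi}^\intercal=\Psi^\intercal$. Consequently $(\Psi^\dag)^k=(\Psi^\intercal)^k=(\Psi^k)^\intercal$, and extracting the $(i,j)$ entry yields the power analogue of the adjoint relation $\langle v_i\vert\Psi\vert v_j\rangle=\langle v_j\vert\Psi^\dag\vert v_i\rangle$ noted just before the statement, namely
\[
\langle v_i\vert(\Psi^\dag)^k\vert v_j\rangle=\bigl((\Psi^k)^\intercal\bigr)_{ij}=(\Psi^k)_{ji}=\langle v_j\vert\Psi^k\vert v_i\rangle .
\]

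For the off-diagonal case I would then reintroduce the zeon weight by writing $\vert\zeta_{\{j\}}\rangle=\zeta_{\{j\}}\vert v_j\rangle$ and using that $\zeta_{\{j\}}$ is central in $\CZ$:
\[
\langle v_i\vert(\Psi^\dag)^k\vert\zeta_{\{j\}}\rangle=\langle v_i\vert(\Psi^\dag)^k\vert v_j\rangle\,\zeta_{\{j\}}=\zeta_{\{j\}}\,\langle v_j\vert\Psi^k\vert v_i\rangle=\langle\zeta_{\{j\}}\vert\Psi^k\vert i\rangle .
\]
The right-hand side is precisely the quantity appearing in Theorem \ref{nil-structure theorem} with the roles of $i$ and $j$ interchanged, so it equals $\sum_{|I|=k+1}\omega_I\zeta_I$ where $\omega_I$ counts the $k$-paths from $v_j$ to $v_i$ on the vertex set indexed by $I$; this matches the claimed enumeration. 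For the diagonal case the argument is even shorter, since transposition fixes the diagonal: $\langle v_i\vert(\Psi^\dag)^k\vert v_i\rangle=(\Psi^k)_{ii}=\langle i\vert\Psi^k\vert i\rangle$, and the diagonal half of Theorem \ref{nil-structure theorem} identifies this with $\sum_{|I|=k}\omega_I\zeta_I$, which enumerates the $k$-cycles based at $v_i$.

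Since the whole argument is a transpose-plus-centrality bookkeeping exercise layered on an already-established theorem, there is no substantive obstacle; this is why the paper can assert the result follows immediately. The only points requiring care are conventional: one must confirm that $\langle v_i\vert(\Psi^\dag)^k\vert\zeta_{\{j\}}\rangle$ denotes the matrix element $(\Psi^\dag)^k_{ij}\zeta_{\{j\}}$, and one must track the index transposition $(i,j)\mapsto(j,i)$ consistently so that the path direction in the conclusion (from $v_j$ to $v_i$, the reverse of the original theorem's $v_i\to v_j$) is reported correctly. A final sanity check is that in the diagonal case the count of $k$-cycles based at $v_i$ is invariant under reversal, which is exactly why the $(i,i)$ entry is unchanged and no index swap is needed there.
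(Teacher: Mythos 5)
Your proposal is correct and is essentially the paper's own argument: the paper proves this lemma in one line by observing $\langle v_i\vert\Psi\vert v_j\rangle=\langle v_j\vert\Psi^\dag\vert v_i\rangle$ and citing Theorem \ref{nil-structure theorem}, and your transpose-plus-reality-of-entries computation (with the centrality of $\zeta_{\{j\}}$ handling the weighted bra/ket) is just a careful elaboration of that same reduction, including the correct reversal of path direction off the diagonal and its irrelevance on the diagonal.
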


Having an understanding of both $\Psi$ and $\Psi^\dag$, attention is now given to the symmetric nilpotent adjacency matrix $\Psi+\Psi^\dag$.

\begin{theorem}[Symmetric Nil-Structure Theorem]
Let $\Psi$ be the nilpotent adjacency matrix of an
$m$-vertex graph $G$, and let $k\in\mathbb{N}$.  For $1\le i\ne j \le m$, 
\begin{equation*}
\left<\zeta_{\{i\}}\vert(\Psi+\Psi^\dag)^k\vert j\right>=
\sum_{\genfrac{}{}{0pt}{}{I\subseteq V}{|I|=k+1}}\omega_I \zeta_{I},\end{equation*}
where $\omega_I$ denotes the number of $k$-paths from $v_i$ to $v_j$ on vertices indexed by $I$.  Further,  when $1\le i \le m$,
\begin{equation*}
\left<i\vert(\Psi+\Psi^\dag)^k\vert i\right>=2\sum_{\genfrac{}{}{0pt}{}{I\subseteq V}{|I|=k}}\omega_I \zeta_{I},\end{equation*}
where $\omega_I$ denotes the number of $k$-cycles on vertex set $I$ based at $v_i\in I$.  
\end{theorem}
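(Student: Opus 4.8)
The plan is to reduce everything to the two Nil-Structure results already available for $\Psi$ and for $\Psi^\dag$, by exploiting the collapse of the binomial-style expansion. Since $\Psi\Psi^\dag=\zm$ (as noted just above, because the $(i,j)$ entry of $\Psi\Psi^\dag$ is $\sum_\ell \psi_{i\ell}\psi_{j\ell}=\sum_\ell\zeta_{\{\ell\}}^2=0$), every word in $\Psi$ and $\Psi^\dag$ containing the factor $\Psi\Psi^\dag$ vanishes. Hence the only surviving words are those in which all copies of $\Psi^\dag$ precede all copies of $\Psi$, giving $(\Psi+\Psi^\dag)^k=\sum_{j=0}^k {\Psi^\dag}^j\Psi^{k-j}$. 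It therefore suffices to evaluate $\langle\zeta_{\{i\}}\vert{\Psi^\dag}^j\Psi^{k-j}\vert j\rangle$ and $\langle i\vert{\Psi^\dag}^j\Psi^{k-j}\vert i\rangle$ summand by summand.

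The single bookkeeping fact driving the argument is that $\Psi$ records the label of each step's \emph{destination} while $\Psi^\dag$ records the label of each step's \emph{source}. Consequently, for $j\ge 1$ every nonzero entry in the $i$th row of ${\Psi^\dag}^j$ is divisible by $\zeta_{\{i\}}$ (the source of the first $\Psi^\dag$-step is $v_i$), and for $k-j\ge 1$ every nonzero entry in the $i$th column of $\Psi^{k-j}$ is divisible by $\zeta_{\{i\}}$ (the destination of the final $\Psi$-step is $v_i$). I would prove these two divisibility claims by a one-line induction directly from the definitions of $\Psi$ and $\Psi^\dag$, peeling off the leftmost respectively rightmost factor.

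For the off-diagonal identity, left-multiplication by $\langle\zeta_{\{i\}}\vert$ introduces an extra factor $\zeta_{\{i\}}$; by the row observation this annihilates every term with $j\ge 1$ (via $\zeta_{\{i\}}^2=0$), leaving only $\langle\zeta_{\{i\}}\vert\Psi^k\vert j\rangle$, which the Nil-Structure Theorem evaluates as $\sum_{|I|=k+1}\omega_I\zeta_I$ with $\omega_I$ the number of $k$-paths $v_i\to v_j$ on $I$. For the diagonal identity, the terms with $1\le j\le k-1$ each carry $\zeta_{\{i\}}$ twice, once from the source of ${\Psi^\dag}^j$ and once from the destination of $\Psi^{k-j}$, hence vanish; only $j=0$ and $j=k$ survive, contributing $\langle i\vert\Psi^k\vert i\rangle$ and $\langle i\vert{\Psi^\dag}^k\vert i\rangle$. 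Each of these counts the $k$-cycles based at $v_i$, by the Nil-Structure Theorem and its $\Psi^\dag$-analogue respectively, so their sum is $2\sum_{|I|=k}\omega_I\zeta_I$.

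The computation is essentially mechanical once the $\Psi\Psi^\dag=\zm$ collapse is in hand, so the point requiring genuine care is not a deep obstacle but rather the verification that the $j=0$ and $j=k$ diagonal contributions count the \emph{same} family of cycles blade-by-blade. This uses that $G$ is undirected: reversing a $k$-cycle based at $v_i$ yields another $k$-cycle based at $v_i$ on the identical vertex set, so the two contributions agree coefficient-by-coefficient on each $\zeta_I$ and genuinely double, rather than merely coinciding in total.
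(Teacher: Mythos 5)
Your proposal is correct and follows essentially the same route as the paper's proof: the collapse $(\Psi+\Psi^\dag)^k=\sum_{j=0}^k{\Psi^\dag}^j\Psi^{k-j}$ via $\Psi\Psi^\dag=\zm$, annihilation of all $j\ge 1$ terms by $\zeta_{\{i\}}$ in the off-diagonal case, and vanishing of the intermediate diagonal terms because both factors carry $\zeta_{\{i\}}$, leaving only $j=0$ and $j=k$. Your closing remark on cycle reversal simply makes explicit what the paper delegates to its separately stated Nil-Structure Lemma for $\Psi^\dag$, so the two arguments coincide in substance.
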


\begin{proof}
Observing that $\Psi\Psi^\dag=\zm$, we see that for any positive integer $k$, \begin{equation*}
(\Psi+\Psi^\dag)^k=\sum_{j=0}^k {\Psi^\dag}^j\Psi^{k-j}.
\end{equation*}
By construction, all nonzero entries in the $i$th row of $\Psi^\dag$ are $\zeta_{\{i\}}$, so that $\langle\zeta_{\{i\}}\vert \Psi^\dag=\zm$ for each $i$.  Thus, when $i\ne j$, 
\begin{align*}
\left<\zeta_{\{i\}}\vert(\Psi+\Psi^\dag)^k\vert j\right>&=\sum_{\ell=0}^k\left<\zeta_{\{i\}}\vert{\Psi^\dag}^\ell\Psi^{k-\ell}\vert j\right>\\
&=\left<\zeta_{\{i\}}\vert\Psi^k\vert j\right>\\
&=\sum_{\genfrac{}{}{0pt}{}{I\subseteq V}{|I|=k+1}}\omega_I \zeta_{I},\end{align*}
where $\omega_I$ denotes the number of $k$-paths from $v_i$ to $v_j$ on vertices indexed by $I$.

Write $\Psi=(\psi_{ij})$ and $\Psi^\dag=({\psi^\dag}_{ij})$, where ${\psi^\dag}_{ij}=\psi_{ji}$.   When $1\le j\le m-1$, it follows that for each $i=1, \ldots, m$,  
\begin{align*}
\langle i\vert {\Psi^\dag}^j\Psi^{k-j}\vert i\rangle&=\sum_{\ell=1}^k {{\psi^\dag}^j}_{i\ell}{\psi^{k-j}}_{\ell i}\\
&=\sum_{\ell=1}^m {\psi^j}_{\ell i}{\psi^{k-j}}_{\ell i}\\
&=0
\end{align*}
because $\zeta_{\{i\}}$ annihilates both ${\psi^j}_{\ell i}$ and ${\psi^{k-j}}_{\ell i}$.  Hence, the only nonzero terms along the main diagonal of $(\Psi+\Psi^\dag)^k$ occur in the cases $j=0$ and $j=k$.  Accordingly, we have \begin{align*}
\langle i\vert(\Psi+\Psi^\dag)^k\vert i\rangle&=\sum_{j=0}^k \langle i\vert {\Psi^\dag}^j\Psi^{k-j}\vert i\rangle\\
&=\langle i\vert \Psi^{k}\vert i\rangle + \langle i\vert {\Psi^\dag}^k\vert i\rangle\\
&=2\sum_{|I|=k}\omega_I\zeta_I,
\end{align*}
where $\omega_I$ denotes the number of $k$-cycles based at $v_i$ on vertices indexed by $I$.
\end{proof}

Observing that $\Lambda^\dag=\Delta-\Psi^\dag$, the eigenvectors of $\Lambda^\dag$ are characterized in similar fashion to those of $\Lambda$.  Assuming $\lambda$ is an eigenvalue of $\Lambda^\dag$ associated with vertex $v_m$ and following the approach of Theorem \ref{Laplacian eigenvector}, $\Lambda^\dag$ has an eigenvector of the form $\nu=(\nu_1, \ldots,  \nu_{m-1}, 1)^\intercal$ where $\nu_i\in{\CZ_m}^\circ$ for $i=1, \ldots, m-1$.  It follows that for each $\ell=1, \ldots, m-1$\begin{equation}\label{eigenvector characterization 2}
\langle \nu_\ell, \zeta_I\rangle=\gamma_I(d-d_\ell)^{-1}\prod_{j\in I}(d-d_j)^{-1},
\end{equation}
where $(-1)^{|I|}\gamma_I$ denotes the number of paths and PWICs $v_m\to v_\ell$ on vertices indexed by $I$.  Hence,  $\nu_\ell=\displaystyle\sum_{I\ne\varnothing}\langle \nu_\ell, \zeta_I\rangle\zeta_I$, providing an algebraic representation of all paths and PWICs $v_m\to v_\ell$ in the graph $G$.

Given the zeon combinatorial Laplacian $\Lambda=\Delta-\Psi$ of a simple graph $G$, we define the {\em symmetric zeon combinatorial Laplacian of $G$} to be the matrix $\Lambda_{\rm sym}=\Delta-(\Psi+\Psi^\dag)$.  When $G$ has a vertex $v$ of unique degree,  $\Lambda_{\rm sym}$ has a zeon eigenvalue $\lambda_v$ that can be characterized in terms of cycles based at $v$ as follows. 

\begin{theorem}\label{Lambda_sym eigenvalue}
Let $\Lambda=\Delta-\Psi$ be the zeon combinatorial Laplacian of a simple graph on $m$ vertices, and let $\Lambda_{\rm sym}=\Delta-(\Psi+\Psi^\dag)$.  If $\lambda$ is the zeon eigenvalue of $\Lambda$ associated with a vertex of unique degree $d=\zco\lambda$, then $\lambda_*=\zco\lambda+2\zdu \lambda$ is the unique zeon eigenvalue of $\Lambda_{\rm sym}$ satisfying $\zco\lambda_*=\zco\lambda$.  
\end{theorem}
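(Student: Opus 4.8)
The plan is to settle existence and uniqueness immediately and then to reduce the theorem to a single identity expressing $\zdu\lambda_*$ as a first-return quantity at $v_m$, which splits cleanly into a $\Psi$-part and a $\Psi^\dag$-part.

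Since $\Psi+\Psi^\dag$ has zero diagonal and off-diagonal entries $\zeta_{\{i\}}+\zeta_{\{j\}}\in\CZ^\circ$, it lies in $\Mat(m,\CZ^\circ)$; hence $\zco\Lambda_{\rm sym}=\Delta$ and $\zco\chi_{\Lambda_{\rm sym}}(t)=\prod_{v\in V}(t-\deg v)$. Because $v$ is the unique vertex of its degree, $t=d$ is a simple zero of this complex polynomial, so by the stated fact that a simple zero of $\chi_{\zco A}$ lifts to a unique spectrally simple eigenvalue, $\Lambda_{\rm sym}$ has a unique spectrally simple eigenvalue $\lambda_*$ with $\zco\lambda_*=d$. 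The theorem is thus equivalent to the claim $\zdu\lambda_*=2\zdu\lambda$.

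Relabel so that $v_m$ is the distinguished vertex and, exactly as in Theorem \ref{Laplacian eigenvector}, produce the normalized eigenvector $w=(w_1,\dots,w_{m-1},1)^\intercal$ of $\Lambda_{\rm sym}$ for $\lambda_*$, with $w_j\in\CZ^\circ$. Reading off the $m$th coordinate of $\Lambda_{\rm sym}w=\lambda_*w$ and using $(\Psi+\Psi^\dag)_{mj}=\zeta_{\{j\}}+\zeta_{\{m\}}$ on the edges at $v_m$ gives, in the sign convention of Theorem \ref{Laplacian eigenvector}, the exact relation
\begin{equation*}
\zdu\lambda_*=\sum_{\{j:\{v_j,v_m\}\in E\}}\zeta_{\{j\}}w_j+\zeta_{\{m\}}\sum_{\{j:\{v_j,v_m\}\in E\}}w_j .
\end{equation*}
The first sum is the mirror of the row-$m$ relation $\zdu\lambda=\sum_j\zeta_{\{j\}}\mu_j$ of the $\Lambda$-eigenvector $\mu$ in Theorem \ref{Laplacian eigenvector}, and the second is the mirror of the analogous relation $\zdu\lambda=\zeta_{\{m\}}\sum_j\nu_j$ of the $\Lambda^\dag$-eigenvector $\nu$. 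I would therefore prove the component identities $\zeta_{\{j\}}w_j=\zeta_{\{j\}}\mu_j$ and $\zeta_{\{m\}}w_j=\zeta_{\{m\}}\nu_j$ for every neighbor $j$ of $v_m$, by induction along the elimination order, from the row recursion $(d_\ell-d-\zdu\lambda_*)w_\ell=\sum_{j}(\zeta_{\{\ell\}}+\zeta_{\{j\}})w_j$ together with the facts that every blade of $\mu_\ell$ contains $m$ (so $\zeta_{\{m\}}\mu_\ell=\zm$) and every blade of $\nu_\ell$ contains $\ell$ (so $\zeta_{\{\ell\}}\nu_\ell=\zm$). Substituting these identities gives $\zdu\lambda_*=\sum_j\zeta_{\{j\}}\mu_j+\zeta_{\{m\}}\sum_j\nu_j=2\zdu\lambda$, which is the assertion.

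The reason the two pieces decouple is that $\Psi\Psi^\dag=\zm$: this is exactly the identity that, in the Symmetric Nil-Structure Theorem, collapses $\langle m\vert(\Psi+\Psi^\dag)^k\vert m\rangle$ to $\langle m\vert\Psi^k\vert m\rangle+\langle m\vert{\Psi^\dag}^k\vert m\rangle$, i.e.\ to two disjoint orientation classes of closed walks based at $v_m$. I expect the main obstacle to be precisely the induction above: because each off-diagonal entry $\zeta_{\{i\}}+\zeta_{\{j\}}$ carries both orientations of an edge simultaneously, one must check that applying $\zeta_{\{j\}}$ (respectively $\zeta_{\{m\}}$) annihilates the ``wrong orientation'' half of each entry as well as the self-referential correction $\zdu\lambda_*w_\ell$, leaving precisely the $\Psi$-data of $\mu$ (respectively the $\Psi^\dag$-data of $\nu$); the annihilation of $\zdu\lambda_*w_\ell$ is where $(\zdu\lambda_*)^2=\zm$ and $\zeta_{\{m\}}\zdu\lambda_*=\zm$ must be exploited. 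A more computational alternative bypasses $w$ and reruns the triangularization of Theorem \ref{laplacian eigenvalue} with $\Psi+\Psi^\dag$ in place of $\Psi$; there the same difficulty reappears as the need to verify that the mixed cross terms in the last diagonal entry cancel, so that its correction is exactly the doubled closed-walk generating function at $v_m$.
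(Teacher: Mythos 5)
Your proposal is correct, and it reaches the key identity $\zdu\lambda_*=2\zdu\lambda$ by a genuinely different final reduction than the paper's. The paper never introduces the eigenvector of $\Lambda_{\rm sym}$: it reruns the triangularization of Theorem \ref{laplacian eigenvalue} on $\lambda_*\mI-\Lambda_{\rm sym}$, forces the last diagonal entry $\eta_m$ to vanish via $0=\vert\lambda_*\mI-\Lambda_{\rm sym}\vert=\eta_m\prod_{\ell<m}\alpha_\ell$, and proves as its key lemma that each $\eta_j$ with $j<m$ is a \emph{doubled} path/PWIC generating function; expanding $\eta_m=\zdu\lambda_*-\sum_{j}(\zeta_{\{m\}}+\zeta_{\{j\}})\eta_j$ with the blade identity $(\zeta_{\{m\}}+\zeta_{\{j\}})(\zeta_{\{j\}}+\zeta_{\{m\}})=2\zeta_{\{j,m\}}$ then produces the factor $2$. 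Your key lemma is instead the pair of component identities $\zeta_{\{j\}}w_j=\zeta_{\{j\}}\mu_j$ and $\zeta_{\{m\}}w_j=\zeta_{\{m\}}\nu_j$, matching the $\Lambda_{\rm sym}$-eigenvector against the eigenvectors of $\Lambda$ and of $\Lambda^\dag$, so the factor $2$ emerges as one copy of $\zdu\lambda$ per orientation of a closed walk at $v_m$. What your route buys is conceptual transparency and a head start on Proposition \ref{symmetric eigenvectors}, whose identity $\zeta_{\{k\}}\xi=\Gamma\nu$ is essentially your first component identity (and both check out against the paper's $5$-vertex example); what it costs is dependence on the $\Lambda^\dag$-eigenvector and on the fact that $\Lambda^\dag$ has the \emph{same} eigenvalue $\lambda$ at $v_m$ — true because the entries of $\Lambda$ are real zeons, so $\Lambda^\dag=\Lambda^\intercal$ and $\chi_{\Lambda^\dag}=\chi_\Lambda$ — a point you use tacitly in calling the second sum a ``mirror'' relation and which should be stated. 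Your deferred induction is not a gap in any sense the paper avoids: it corresponds exactly to the paper's claim about the $\eta_j$, which the paper itself dispatches ``mutatis mutandis from Cases 3 and 4'' of Theorem \ref{laplacian eigenvalue}, and the annihilation facts you flag ($\zeta_{\{m\}}\mu_\ell=0$, $\zeta_{\{\ell\}}\nu_\ell=0$, $\zeta_{\{m\}}\zdu\lambda_*=0$, $(\zdu\lambda_*)^2=0$) are precisely the ingredients that make it run. One further point in your favor: you establish existence and uniqueness of $\lambda_*$ explicitly from the spectral-lifting fact, whereas the paper's proof leaves this implicit.
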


\begin{proof}
As in the proof of Theorem \ref{laplacian eigenvalue}, let us consider the reduced row echelon form of $\lambda_*\mI-\Lambda_{\rm sym}$, assuming the last vertex, $v_m$, is associated with $\lambda_*$.  Writing $\zeta_{i\oplus j}=\zeta_{\{i\}}+\zeta_{\{j\}}$ for ease of notation and setting $\ell_i=(d-d_i)+\zdu \lambda_s$, we begin with \begin{equation}
 \lambda_*\mI-\Lambda_{\rm sym}=\left(\begin{matrix}
\ell_1&\varepsilon_{12}\zeta_{1\oplus 2}&\cdots&\cdots&\varepsilon_{1 m}\zeta_{1\oplus m}\\
\varepsilon_{21}\zeta_{2\oplus 1}&\ell_2&\varepsilon_{23}\zeta_{2\oplus 3}&\cdots&\varepsilon_{2m}\zeta_{2\oplus m}\\
\varepsilon_{31}\zeta_{3\oplus 1}&\varepsilon_{32}\zeta_{3\oplus 2}&\ell_3&\cdots&\varepsilon_{3m}\zeta_{3\oplus m}\\
\vdots&\vdots&\vdots&\ddots&\vdots\\
\varepsilon_{m1}\zeta_{m\oplus 1}&\cdots&\cdots&\cdots&\zdu \lambda_*
\end{matrix}\right).  
 \end{equation}
As before,  $\varepsilon_{ij}$ is defined by 
\begin{equation*}
\varepsilon_{i j}=\begin{cases}
1&\text{\rm if }\{v_i,v_j\}\in E,\\
0&\text{\rm otherwise.}
\end{cases}
\end{equation*}

Once again, the only invertible elements lie along the main diagonal, and the matrix can be put in the following form without changing the determinant:
\begin{equation}\label{mostly triangular form of symmetric laplacian}
\phi(\lambda_*\mI-\Lambda_{\rm sym})=\left(\begin{matrix}
\alpha_1&*&\cdots&*&\eta_1\\
0&\alpha_2&\cdots&*&\eta_2\\
0&0&\ddots&\vdots&\vdots\\
\vdots&\vdots&0&\alpha_{m-1}&\eta_{m-1}\\
\varepsilon_{m1}\zeta_{m\oplus 1}&\cdots&\cdots&\cdots&\zdu \lambda_*
\end{matrix}\right),  
 \end{equation}
where $\alpha_\ell\in\CZ^\times$ for $\ell=1, \ldots, m-1$, and $\eta_\ell\in\CZ^\circ$ for $\ell=1, \ldots, m$.   

In the last step of the triangularization, we obtain \begin{equation}\label{triangular form of symmetric laplacian}
\tau(\lambda_*\mI-\Lambda_{\rm sym})=\left(\begin{matrix}
\alpha_1&*&\cdots&*&\eta_1\\
0&\alpha_2&\cdots&*&\eta_2\\
0&0&\ddots&\vdots&\vdots\\
\vdots&\vdots&0&\alpha_{m-1}&\eta_{m-1}\\
0&\cdots&\cdots&0&\eta_m
\end{matrix}\right),  
 \end{equation}
where $\eta_m=0$ since $0=|\lambda_*\mI-\Lambda_{\rm sym}|=\eta_m\prod_{j=1}^{m-1}\alpha_j$. 

Notably, \begin{equation}\label{last eta}
 \eta_m=\zdu\lambda_*-\sum_{\{j: \{v_j,v_m\}\in E\}}\zeta_{m\oplus j}\eta_j,
 \end{equation}
so that our work will be finished by showing $\sum_{\{j: \{v_j,v_m\}\in E\}}\zeta_{m\oplus j}\eta_j = 2\zdu\lambda$.

\begin{description}
\item [{\bf Claim.}] For $j=1, \ldots, m-1$, 
\begin{equation}\label{eta claim}
\eta_j=\varepsilon_{jm}\zeta_{j\oplus m}-2\sum_{I\subseteq [j-1]}\gamma_{I\cup\{m\}}\zeta_{I\cup\{m\}}\prod_{t\in I\cap[j-1]}{\ell_t}^{-1},
\end{equation}
where $(-1)^{|I|\cup\{m\}}\gamma_{I\cup\{m\}}$ is the number of paths and PWICs from $v_j$ to $v_m$ on vertices indexed by $I\cup\{m\}$. 
\\

\noindent{\em Proof of claim.}
Any nonzero off-diagonal entry of $\Lambda_{\rm sym}$ is an element of the form
\begin{equation*}
\langle i\vert\Lambda_{\rm sym}\vert j\rangle = \zeta_{i\oplus j}.
\end{equation*}
In light of the symmetric nil-structure theorem, such an entry can be interpreted as the sum of two one-step paths in the graph: $\zeta_{\{i\}}$ represents a one-step path from $v_j\to v_i$, while $\zeta_{\{j\}}$ represents a one-step path from $v_i\to v_j$.  Since the graph is undirected, walks $v_i\to v_j$ are in one-to-one correspondence with walks $v_j\to v_i$.  The rest follows {\em mutatis mutandis} from Cases 3 and 4 of Theorem \ref{laplacian eigenvalue}.  This concludes the proof of the claim.
\end{description}

Observing that $\zeta_{i\oplus m}\zeta_{m\oplus i}=2\zeta_{\{i,m\}}$ and substituting \eqref{eta claim} into \eqref{last eta}, the following now becomes apparent:
\begin{align*}
 \eta_m&=\zdu\lambda_*-\sum_{\{j: \{v_j,v_m\}\in E\}}\zeta_{m\oplus j}\eta_j\\
 &=\zdu\lambda_*-\sum_{\{j: \{v_j,v_m\}\in E\}}\zeta_{m\oplus j}\zeta_{j\oplus m}-2\sum_{I\subseteq [j-1]}\left[\gamma_{I\cup\{m\}}\zeta_{I\cup\{m\}}\prod_{t\in I\cap[j-1]}{\ell_t}^{-1}\right]\\
 &=\zdu\lambda_*-2\sum_{\{j: \{v_j,v_m\}\in E\}}\left[\zeta_{\{j,m\}}-\sum_{I\subseteq [j-1]}\gamma_{I\cup\{m\}}\zeta_{I\cup\{m\}}\prod_{t\in I\cap[j-1]}{\ell_t}^{-1}\right]\\
 &=\zdu\lambda_*-2\sum_{I\subseteq[m]}\omega_I\zeta_I\prod_{j\in I\setminus\{m\}}(d-d_j)^{-1}\\
 &=\zdu\lambda_*-2\zdu \lambda.
 \end{align*}
Since $\eta_m=0$, the proof is complete.
\end{proof}

The eigenvectors of $\Lambda_{\rm sym}$ are characterized below. 

\subsection*{Eigenvectors of $\Lambda_{\rm sym}$}

Let $\lambda_*$ denote the eigenvalue of the symmetric zeon Laplacian $\Lambda_{\rm sym}$ associated with vertex $v_m$.  Let $\xi$ be the unit eigenvector satisfying $\Lambda_{\rm sym} \xi=\lambda_*\xi$ and $\langle \xi\vert m\rangle=1$.

\begin{description}
\item[{\bf Claim.}] For each $j=1, \ldots, m$, the following equality holds:
\begin{equation*}
\langle \xi\vert \zeta_{\{j\}}\rangle = \langle \zeta_{\{m\}}\xi\vert j\rangle.
\end{equation*}
Equivalently, 
\begin{equation*}
\langle (\zeta_{\{m\}}-\zeta_{\{j\}})\xi\vert j\rangle = 0
\end{equation*}
for each $j=1, \ldots, m$.  Hence, $\zeta_{\{m\}}-\zeta_{\{j\}}\in\Ann \langle\xi\vert j\rangle$ for each $j$.

\end{description}

Letting $d=\zco\lambda_*$ and writing $\xi=(\xi_1, \xi_2,\ldots, \xi_{m-1},1)^\intercal$, we now see that
\begin{align*}
\lambda_*&=d+\langle \Lambda_{\rm sym}\xi\vert m\rangle\\
&=d+\sum_{\{j: \{v_j,v_m\}\in E\}}(\zeta_{\{m\}}+\zeta_{\{j\}})\xi_j\\
&=d+2\zeta_{\{m\}}\sum_{\{j: \{v_j,v_m\}\in E\}}\xi_j\\
&=d+2\sum_{\{j: \{v_j,v_m\}\in E\}}\zeta_{\{j\}}\xi_j.
\end{align*}
Letting $\nu$ denote the unit eigenvector of $\Lambda$ satisfying $\Lambda \nu=\lambda\nu$ and $\langle \nu\vert m\rangle=1$,  $\zdu\lambda_*=2\zdu\lambda$ now implies 
\begin{align*}
\zdu\lambda_*&=2\sum_{\{j:\{v_j,v_m\}\in E\}}\zeta_{\{j\}}\xi_j\\
&=2\sum_{\{j:\{v_j,v_m\}\in E\}}\zeta_{\{j\}}\nu_j.
\end{align*}  

\begin{proposition}[Eigenvectors of the Symmetric Zeon Laplacian]\label{symmetric eigenvectors}
Let $\Lambda$ denote the zeon combinatorial Laplacian of a graph $G=(V,E)$ on $m$ vertices $\{v_1, \ldots, v_m\}$, such that $v_k$ is of unique degree (i.e., label) $d$.  Let $\lambda=d+\zdu\lambda$ be the zeon eigenvalue of $\Lambda$ associated with vertex $v_k$, and let $\nu\in\CZ^m$ denote the associated unit eigenvector whose $k$th component is 1.  Then, the symmetric zeon Laplacian $\Lambda_{\rm sym}$ has eigenvalue $\lambda_*=d+2\zdu\lambda$ with associated unit eigenvector $\xi\in\CZ^m$ satisfying \begin{equation}
\zeta_{\{k\}}\xi=\Gamma\nu,
\end{equation}
where $\Gamma={\rm diag}(\zeta_{\{1\}},\zeta_{\{2\}},\ldots, \zeta_{\{m\}})$.
\end{proposition}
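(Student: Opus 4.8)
The plan is to relabel vertices as in Theorem~\ref{Lambda_sym eigenvalue} so that the vertex of unique degree is $v_m$, and to read the asserted identity $\zeta_{\{k\}}\xi=\Gamma\nu$ entrywise: it says $\zeta_{\{m\}}\xi_\ell=\zeta_{\{\ell\}}\nu_\ell$ for each $\ell$. The first step is to invoke the Claim established immediately before this proposition, $(\zeta_{\{m\}}-\zeta_{\{\ell\}})\xi_\ell=\zm$, which lets me replace $\zeta_{\{m\}}\xi_\ell$ by $a_\ell:=\zeta_{\{\ell\}}\xi_\ell$ and also tells me that each $a_\ell=\zeta_{\{m\}}\xi_\ell$ is supported on basis blades containing the index $m$. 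Setting $b_\ell:=\zeta_{\{\ell\}}\nu_\ell$, the proposition reduces to the single claim that $a_\ell=b_\ell$ for every $\ell$, i.e.\ $\Gamma\xi=\Gamma\nu$.

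Next I would extract a recurrence for $b$ from $\Lambda\nu=\lambda\nu$ and an identical one for $a$ from $\Lambda_{\rm sym}\xi=\lambda_*\xi$. For $\ell\ne m$ the $\ell$th row of $\Lambda\nu=\lambda\nu$ gives $(d-d_\ell)\nu_\ell=\sum_{\{j:\{v_j,v_\ell\}\in E\}}\zeta_{\{j\}}\nu_j$, using that $\zdu\lambda\cdot\nu_\ell=\zm$ since both factors live on blades containing $m$ (Theorem~\ref{Laplacian eigenvector}); multiplying through by $\zeta_{\{\ell\}}$ and using $\zeta_{\{\ell\}}^2=0$ produces
\begin{equation*}
(d-d_\ell)\,b_\ell=\zeta_{\{\ell\}}\sum_{\{j:\{v_j,v_\ell\}\in E\}} b_j .
\end{equation*}
In the symmetric case each off-diagonal entry of $\Lambda_{\rm sym}$ is $\zeta_{\{\ell\}}+\zeta_{\{j\}}$; after multiplying the $\ell$th row equation by $\zeta_{\{\ell\}}$ the factor $\zeta_{\{\ell\}}(\zeta_{\{\ell\}}+\zeta_{\{j\}})\xi_j$ collapses to $\zeta_{\{\ell\}}\zeta_{\{j\}}\xi_j=\zeta_{\{\ell\}}a_j$, giving the same recurrence with $a$ in place of $b$. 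Both carry the identical seed $a_m=b_m=\zeta_{\{m\}}$, coming from $\xi_m=\nu_m=1$.

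The main obstacle is the extra diagonal term $2\,\zdu\lambda\cdot\xi_\ell$ forced by $\lambda_*=d+2\zdu\lambda$ (Theorem~\ref{Lambda_sym eigenvalue}); unlike in the $\Lambda$ computation it does not vanish on its own, because $\xi_\ell$ itself is not supported on blades containing $m$. The point where the Claim becomes indispensable is that \emph{after} multiplication by $\zeta_{\{\ell\}}$ this term is $2\,\zdu\lambda\cdot(\zeta_{\{\ell\}}\xi_\ell)=2\,\zdu\lambda\cdot a_\ell$, and since every term of $\zdu\lambda$ contains $m$ (Theorem~\ref{laplacian eigenvalue}) one has $\zdu\lambda\cdot\zeta_{\{m\}}=\zm$, whence $\zdu\lambda\cdot a_\ell=\zdu\lambda\cdot\zeta_{\{m\}}\xi_\ell=\zm$. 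Thus the offending term disappears exactly under the same multiplication that produced the recurrence, and the two recurrences coincide.

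Finally I would conclude by a grading argument. Since $d-d_\ell$ is invertible for $\ell\ne m$, the common recurrence reads $a_\ell=(d-d_\ell)^{-1}\zeta_{\{\ell\}}\sum_{\{j:\{v_j,v_\ell\}\in E\}} a_j$, and multiplication by $\zeta_{\{\ell\}}$ strictly raises blade grade, so the grade-$p$ part of $a_\ell$ is determined by the grade-$(p-1)$ parts of the $a_j$. Both $a$ and $b$ agree in grades $\le 1$ (they vanish there for $\ell\ne m$ since $\xi_\ell,\nu_\ell\in{\CZ}^\circ$, and equal $\zeta_{\{m\}}$ at $\ell=m$), so induction on grade — equivalently, uniqueness of the fixed point of the nilpotent operator defined by the recurrence — forces $a_\ell=b_\ell$ for every $\ell$. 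Restoring the original labels yields $\zeta_{\{k\}}\xi=\Gamma\nu$.
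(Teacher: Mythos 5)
Your proposal is correct, but it takes a genuinely different route from the paper's own proof. The paper argues combinatorially: invoking Theorems \ref{Laplacian eigenvector} and \ref{Lambda_sym eigenvalue}, it reads the entry $\xi_\ell$ as a generating function for paths and PWICs $v_k\to v_\ell$ and $v_\ell\to v_k$, and $\nu_\ell$ as one for paths and PWICs $v_\ell\to v_k$; multiplication by $\zeta_{\{k\}}$ kills every term of $\xi_\ell$ except those arising from paths $v_k\to v_\ell$, multiplication by $\zeta_{\{\ell\}}$ kills the PWIC terms of $\nu_\ell$, and the two surviving path families coincide with identical coefficients because path reversal is a bijection in an undirected graph. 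You instead argue algebraically: you use the annihilator Claim to reduce the statement to $\Gamma\xi=\Gamma\nu$, derive the \emph{same} linear recurrence for $a_\ell=\zeta_{\{\ell\}}\xi_\ell$ and $b_\ell=\zeta_{\{\ell\}}\nu_\ell$ from the two eigen-equations (your handling of the extra term is right: $2\zdu\lambda\,\zeta_{\{\ell\}}\xi_\ell = 2\zdu\lambda\,\zeta_{\{m\}}\xi_\ell = \zm$ since every blade of $\zdu\lambda$ contains $m$), and close with a grade-induction showing the recurrence with seed $\zeta_{\{m\}}$ has a unique solution. What your route buys: it completely avoids the path/PWIC interpretation of $\xi_\ell$, which the paper establishes only ``mutatis mutandis'' inside Theorem \ref{Lambda_sym eigenvalue}, so your argument is arguably more self-contained and easier to make fully rigorous; the uniqueness step is clean linear algebra over a graded nilpotent ring. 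What it costs: you lean on the boxed Claim preceding the proposition (which the paper states without proof, and which its own proof of this proposition does not actually use), and your argument is less informative combinatorially --- the paper's proof exhibits \emph{why} both sides agree (both enumerate the same paths $v_k\to v_\ell$, equivalently $v_\ell\to v_k$), whereas yours shows only \emph{that} they agree. Both proofs share the relabeling convention, the seed $\xi_m=\nu_m=1$, and the key annihilation fact $\zdu\lambda\,\zeta_{\{m\}}=0$.
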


\begin{proof}
Observing that 
\begin{equation*}
\Gamma\nu=\displaystyle\begin{pmatrix}
\zeta_{\{1\}}&0&\cdots&0\\
0&\ddots&0&0\\
\vdots&\cdots&\zeta_{\{m-1\}}&0\\
0&\cdots&0&\zeta_{\{m\}}\end{pmatrix}\begin{pmatrix}
\nu_1\\
\vdots\\
\nu_{m-1}\\
\nu_m
\end{pmatrix}=\begin{pmatrix}
\zeta_{\{1\}}\nu_1\\
\vdots\\
\zeta_{\{m-1\}}\nu_{m-1}\\
\zeta_{\{m\}}\nu_m
\end{pmatrix},
\end{equation*}
the result is established by showing that $\zeta_{\{k\}}\xi_\ell=\zeta_{\{\ell\}}\nu_\ell$ for each $\ell=1,\ldots, m$.  Since $\nu_k=\xi_k=1$, the result is trivial when $\ell=k$. Assuming $\ell\ne k$, we see that the $\ell$th entry of $\xi$ represents sums of PWICs $v_k\to v_\ell$ and $v_\ell\to v_k$ by Theorems \ref{Laplacian eigenvector} and \ref{Lambda_sym eigenvalue}.  Multiplication by $\zeta_{\{k\}}$ eliminates both paths and PWICs $v_\ell\to v_k$ and eliminates PWICs $v_k\to v_\ell$, leaving only paths.  Each of these paths can also be viewed as a path $v_\ell\to v_k$ because the graph is undirected.  Similarly, $\nu_\ell$ represents paths and PWICs $v_\ell\to v_k$, while multiplication by $\zeta_{\{\ell\}}\nu_\ell$ eliminates those terms representing PWICs $v_\ell\to v_k$.
\end{proof}

\begin{figure}
$$\fbox{\includegraphics[width=160pt]{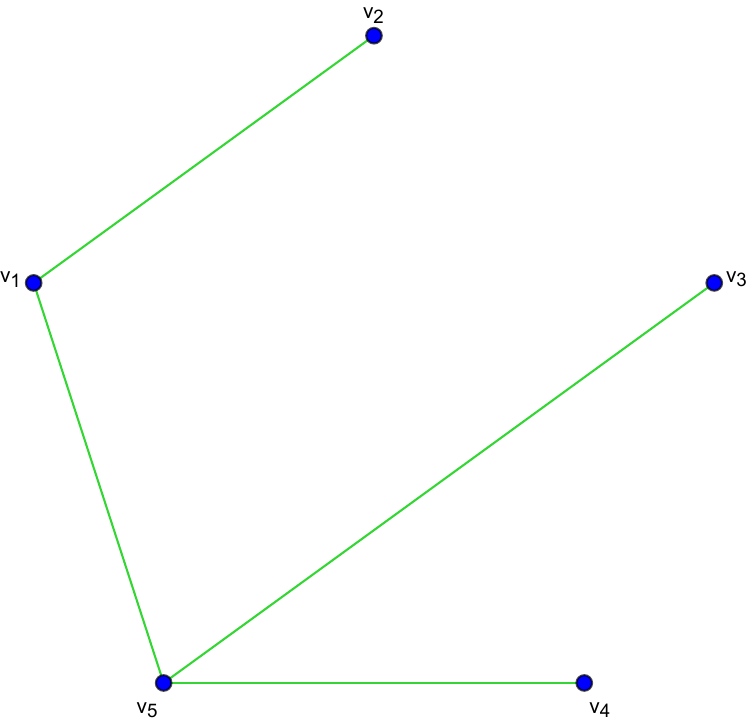}\hskip10pt\includegraphics[width=150pt]{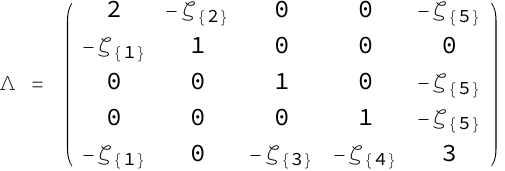}}$$
\caption{A 5-vertex graph and its zeon combinatorial Laplacian.\label{5Vgraph}}
\end{figure}
\begin{example}
Consider the simple graph of Fig. \ref{5Vgraph}.  The eigenvalue and associated zeon eigenvector of $\Lambda$ corresponding to vertex $v_5$ are \begin{equation*}
\lambda=3+\zeta_{\{1,5\}}+\frac{1}{2}\zeta_{\{3,5\}}+\frac{1}{2}\zeta_{\{4,5\}},\hspace{20pt}\nu=\begin{pmatrix}-\zeta_{\{5\}}-\frac{1}{2} \zeta _{\{1,2,5\}}\\[.1em]
\frac{1}{2} \zeta _{\{1,5\}}\\[.1em]
-\frac{\zeta _{\{5\}}}{2}\\[.1em]
-\frac{\zeta _{\{5\}}}{2}\\[.1em]
1\end{pmatrix}.
\end{equation*}
The eigenvalue and associated zeon eigenvector of $\Lambda_{\rm sym}$ corresponding to vertex $v_5$ are $\lambda_*=3+2\zeta_{\{1,5\}}+\zeta_{\{3,5\}}+\zeta_{\{4,5\}}$ and \begin{equation*}
\xi=\begin{pmatrix}-\zeta _{\{1,2,5\}}+\zeta _{\{1,3,5\}}+\zeta _{\{1,4,5\}}-\zeta _{\{1\}}-\zeta _{\{5\}}\\[.1em]
\frac{1}{2} \zeta _{\{1,2\}}+\frac{1}{2} \zeta _{\{1,5\}}+\frac{1}{2} \zeta _{\{2,5\}}-\frac{3}{4} \zeta _{\{1,2,3,5\}}-\frac{3}{4} \zeta _{\{1,2,4,5\}}\\[.1em]
\frac{1}{2} \zeta _{\{1,3,5\}}+\frac{1}{4} \zeta _{\{3,4,5\}}-\frac{\zeta _{\{3\}}}{2}-\frac{\zeta _{\{5\}}}{2}\\[.1em]
\frac{1}{2} \zeta _{\{1,4,5\}}+\frac{1}{4} \zeta _{\{3,4,5\}}-\frac{\zeta _{\{4\}}}{2}-\frac{\zeta _{\{5\}}}{2}\\[.1em]
1\end{pmatrix},
\end{equation*}
respectively.  Straightforward computation shows that \begin{equation*}
\zeta_{\{5\}}\xi=\begin{pmatrix}
-\zeta _{\{1,5\}}\\
\frac{1}{2} \zeta _{\{1,2,5\}}\\[.1em]
-\frac{1}{2} \zeta _{\{3,5\}}\\[.1em]
-\frac{1}{2} \zeta _{\{4,5\}}\\
\zeta _{\{5\}}\end{pmatrix}=\begin{pmatrix}\zeta_{\{1\}}&0&0&0&0\\
0&\zeta_{\{2\}}&0&0&0\\
0&0&\zeta_{\{3\}}&0&0\\
0&0&0&\zeta_{\{4\}}&0\\
0&0&0&0&\zeta_{\{5\}}
\end{pmatrix}\nu.
\end{equation*}
\end{example}

\subsubsection{The $\q$-Zeon Combinatorial Laplacian}

Finally, we turn to a self-adjoint $\CZ$-linear operator whose eigenvalues characterize all cycles in a simple graph.  

\begin{definition}[$\q$-Zeon Combinatorial Laplacian]\label{quantum laplacian def}
Let $G=(V,E)$ be a simple graph on $m$ vertices $E=\{v_1, \ldots, v_m\}$, and let $\q=(q_1, \ldots, q_m)\in{\mathbb{Q}^+}^m$ satisfy $i\ne j\Rightarrow q_i\ne q_j$.  We define the {\em zeon $\q$-combinatorial Laplacian of $G$} to be the matrix $\Lambda_\q=\Delta_\q-(\Psi+\Psi^\dag)$, where $\Delta_\q={\rm diag}(\q)$.  
\end{definition}

The matrix $\Lambda_\q$ can now be considered a quantum observable.  The values taken by the observable are the zeon eigenvalues $\lambda_{1}, \ldots, \lambda_{m}$, each associated with a unique vertex of the graph.   The pure states of $\Lambda_\q$ are the normalized eigenvectors $\xi_j\in\CZ^m$ associated with the eigenvalues $\lambda_{j}$.

Letting $\xi_j\in\CZ^m$ be a normalized eigenvector associated with the vertex $v_j\in V$, the {\em expectation of $\Lambda_\q$ in the state $\xi_j$} is given by 
\begin{equation*}
\langle \xi_j\vert \Lambda_\q \vert \xi_j\rangle= q_j+ 2\sum_{I\subseteq [m]}\omega_I\zeta_{I}\prod_{\ell\in I\setminus\{j\}}(q_j-q_\ell)^{-1},  
\end{equation*}
where $(-1)^{|I|}\omega_I$ is the number of cycles based at $v_j$ on vertices indexed by $I$.

As seen in Proposition \ref{symmetric eigenvectors}, the eigenvalues and associated eigenvectors of the symmetric zeon combinatorial Laplacian can be derived from the corresponding values associated with the underlying zeon Laplacian, $\Lambda$.  These values, in turn, can be obtained by examining powers of the underlying nilpotent adjacency matrix.

\section{Concluding Remarks}\label{conclusion}

Beginning with a simple graph $G=(V,E)$ on $m$ vertices, we define the zeon combinatorial Laplacian $\Lambda=\Delta-\Psi$ associated with $G$ in terms of the nilpotent adjacency matrix $\Psi$ of $G$.  In this paper, we have established the following results.

\begin{enumerate}
\item When $G$ has a vertex $v$ of unique degree $d$, the zeon Laplacian $\Lambda$ of $G$ has a unique zeon eigenvalue $\lambda$ whose scalar part is $d$ and whose dual part enumerates the cycles based at $v$.
\item The eigenvector associated with an eigenvalue of $\Lambda$ can be characterized in terms of paths and PWICs terminating at that vertex.
\item Cycles, paths, and PWICs comprising the eigenvalues and eigenvectors can be computed using powers (or exponentials) of the graph's nilpotent adjacency matrix.
\item Eigenvalues and eigenvectors of the symmetric zeon combinatorial Laplacian have been characterized in terms of eigenvalues and eigenvectors of the general zeon combinatorial Laplacian.
\item The zeon $\q$-combinatorial Laplacian can be viewed as a quantum observable whose values enumerate cycles contained in the associated graph. 
\end{enumerate}
\vskip10pt

\subsection{Avenues for Further Exploration}

With these results in hand, a number of ideas come to mind for further research.   Some are included below.  

\begin{enumerate}

\item Graph processes.\\
By considering sequences of zeon combinatorial Laplacians, limit theorems might be established for sequences of graphs evolving over time.
 
\item Quantum algorithms.\\
Since symmetric zeon Laplacians can be regarded as quantum random variables, one is naturally led to consider their potential applications in developing quantum algorithms for solving graph-theoretical problems.

\item Graph colorings and/or enumeration problems in colored graphs.\\
Paths and cycles in colored graphs have been studied in a number of works over the years  \cite{albert,broersma05,frieze,wong}.  Zeon-algebraic formulations of graph coloring problems were first developed in \cite{StaplesStellhorn2017}.  Spectral properties of zeon matrices used in graph coloring problems are natural topics for study.

\end{enumerate}

\section*{Declarations}

\subsection*{Funding}

No funding was received to assist with the preparation of this manuscript.

\subsection*{Conflicts of Interest/Competing Interests}

The author has no relevant financial or non-financial interests to disclose.

\end{document}